\documentclass[12pt]{amsart}

\usepackage[utf8]{inputenc}

\usepackage{amsmath}
\usepackage{amsfonts,amssymb}
\usepackage{xypic}

\theoremstyle{plain}
\newtheorem*{theorem*}{Theorem}
\newtheorem*{lemma*} {Lemma}
\newtheorem*{corollary*} {Corollary}
\newtheorem*{proposition*} {Proposition}
\newtheorem{theorem}{Theorem}[section]
\newtheorem{lemma}[theorem]{Lemma}

\newtheorem{proposition}[theorem]{Proposition}

\theoremstyle{remark}

\newtheorem*{claim}{Claim}

\theoremstyle{definition}

\textwidth 6in    

\oddsidemargin.25in    
\evensidemargin.25in     

\def\sl{\op{SL}}
\def\op{\operatorname}

\def\co{\colon}

\def\s{\sigma}
\def\gl{\op{GL}}

\newcommand{\R}{\mathbb{R}}

\newcommand{\Z}{\mathbb{Z}}
\newcommand{\C}{\mathbb{C}}

\def\id{\mbox{id}}

\def\wti{\widetilde}

\def\part{\partial}
\def\ll{\langle}
\def\rr{\rangle}

\def\a{\alpha}

\def\tor{\mbox{Tor}}
\def\bp{\begin{pmatrix}}

\def\sm{\setminus}
\def\ep{\end{pmatrix}}
\def\bn{\begin{enumerate}}

\def\en{\end{enumerate}}
\def\ba{\begin{array}}
\def\ea{\end{array}}

\def\s{\sigma}

\def\ti{\tilde}

\def\fr12{\frac{1}{2}}


\def\ker{\mbox{Ker}}

\def\hom{\mbox{Hom}}

\def\aut{\mbox{Aut}}

\def\tpm{[t^{\pm 1}]}

\def\be{\begin{equation}}
\def\ee{\end{equation}}

\def\ct{\C\tpm}

\def\ol{\overline}    

\begin{document}

\title{$3$-manifolds that can be made acyclic}

\author{Stefan Friedl}
\address{Fakult\"at f\"ur Mathematik\\ Universit\"at Regensburg\\93040 Regensburg\\   Germany}
\email{sfriedl@gmail.com}

\author{Matthias Nagel}
\address{Fakult\"at f\"ur Mathematik\\ Universit\"at Regensburg\\93040 Regensburg\\   Germany}
\email{matthias.nagel@mathematik.uni-regensburg.de}
\date{\today}

\begin{abstract}
We determine which  $3$-manifolds admit a unitary representation such that the corresponding twisted chain complex is acyclic.
\end{abstract}

\maketitle

\section{Introduction}
Let $N$ be a $3$-manifold. Here and throughout the paper we assume that all 3-manifolds are compact, connected and orientable.
We say that a representation $\a\co \pi_1(N)\to \gl(k,\C)$ \emph{makes $N$ acyclic} if the twisted homology groups
$H_k(N;\C^k_\a)$ all vanish.

It is clear that $S^3$ does not admit a representation that makes it acyclic since the 0-th homology will always be non-trivial
On the other hand, any lens space $L(p,q)$ can be made acyclic using any non-trivial one-dimensional representation.
This fact played a major r\^ole in Reidemeister's classification of lens spaces \cite{Re35} using torsion invariants.
If $N$ is a finite volume hyperbolic $3$-manifold, then it follows from Raghunathan's vanishing theorem \cite{Ra65}
and from work of Menal-Ferrer  and Porti \cite[Section~2]{MFP14} that there exists a lift $\pi_1(N)\to \sl(2,\C)$ of the holonomy representation that makes $N$ acyclic.
We refer to Section \ref{section:acyclicsl2c} for more details. 

In this paper we  give a complete answer to the question:  which  $3$-manifolds can be made acyclic using unitary representations? Before we state our main result we recall that the Prime Decomposition Theorem of Kneser--Milnor \cite{Kn29,Mi62} says that any $3$-manifold admits a unique decomposition $N\cong N_1\#\dots \#N_k$ as a connected sum of prime $3$-manifolds $N_1,\dots,N_k$. 

Using recent results of Agol \cite{Ag08,Ag13}, Liu \cite{Liu13}, Przytycki--Wise \cite{PW14,PW12}, Wise \cite{Wi09,Wi12a,Wi12b} and also of the second author \cite{Na14} we will prove the following theorem.

\begin{theorem}\label{mainthm}
Given a  $3$-manifold $N\ne S^3$ the following statements are equivalent:
\bn
\item $N$ can be made acyclic using a unitary representation,
\item $N$ is closed or has toroidal boundary
and $N$ has at most one prime summand that is not a rational homology sphere.
\en
\end{theorem}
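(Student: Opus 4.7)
My plan is to prove the two directions separately.

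\textbf{(1) $\Rightarrow$ (2).} Two obstructions do the work. First, if $\alpha\co \pi_1(N)\to \gl(k,\C)$ makes $N$ acyclic then $\chi(N;\C^k_\alpha)=k\,\chi(N)=0$, so $\chi(N)=0$, and combined with $\chi(N)=\tfrac12\chi(\partial N)$ for a compact $3$-manifold this forces $\partial N$ to consist of tori (or to be empty). Second, suppose for contradiction that in the prime decomposition at least two summands are not rational homology spheres; group them as $N=N'\# N''$ with $b_1(N')>0$ and $b_1(N'')>0$, and let $\alpha\co\pi_1(N)\to\op{U}(V)$ be unitary with $H_*(N;V)=0$. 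Applying Mayer--Vietoris to $N=(N'\setminus\op{int}(B^3))\cup_{S^2}(N''\setminus\op{int}(B^3))$ with local coefficients $V$ (trivial on the simply-connected $S^2$) gives, at degree $1$,
\begin{equation*}
0 \to H_1(N';V)\oplus H_1(N'';V) \to H_1(N;V) \to V \to V_{\pi_1(N')}\oplus V_{\pi_1(N'')}.
\end{equation*}
The acyclicity hypothesis forces $H_1(N';V)=H_1(N'';V)=0$ and injectivity of the right-most map, which for a unitary representation (where coinvariants and invariants are identified by orthogonal projection) translates to $V^{\alpha(\pi_1 N')}+V^{\alpha(\pi_1 N'')}=V$. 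On the other hand, if $V^{\alpha(\pi_1 N')}\ne 0$ then the trivial sub-representation contributes a direct summand $H_1(N';\C)^{\dim V^{\alpha(\pi_1 N')}}\cong\C^{b_1(N')\cdot \dim V^{\alpha(\pi_1 N')}}\ne 0$ to $H_1(N';V)$, contradicting its vanishing; so $V^{\alpha(\pi_1 N')}=0$, and by symmetry $V^{\alpha(\pi_1 N'')}=0$, contradicting $V\ne 0$ in the sum.

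\textbf{(2) $\Rightarrow$ (1).} Write the prime decomposition $N=N_0\# N_1\#\cdots\# N_m$, where $N_0$ is the at-most-one summand with $b_1>0$ (and carries $\partial N$ when non-empty), and $N_1,\dots,N_m$ are rational homology spheres distinct from $S^3$. The key input is a unitary representation $\alpha_0\co\pi_1(N_0)\to \op{U}(V_0)$ with no non-zero fixed vectors such that $H_*(N_0;V_0)=0$; for hyperbolic $N_0$ this is the $\sl(2,\C)$-holonomy lift recalled in the introduction, and for general $N_0$ it is supplied by the virtual specialness results of Agol, Liu, Przytycki--Wise and Wise together with the second author's construction \cite{Na14}. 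When $N_0$ is present, set $V:=V_0$ and extend $\alpha_0$ to $\pi_1(N)=\pi_1(N_0)*\pi_1(N_1)*\cdots*\pi_1(N_m)$ by letting each $\pi_1(N_j)$ for $j\ge 1$ act trivially; since each $N_j$ is a rational homology sphere, $H_*(N_j;V_0)=H_*(N_j;\C)^{\dim V_0}=0$, and the Mayer--Vietoris calculation of the previous step confirms acyclicity of $N$ (the critical equality $V^{\alpha(\pi_1 N_0)}+V^{\alpha(\pi_1(N_1\#\cdots\# N_m))}=0+V_0=V$ holds). When $N_0$ is absent, pick a non-trivial character $\chi_j\co\pi_1(N_j)\to \op{U}(1)$ for each $j$ (possible since $H_1(N_j;\Z)$ is a non-trivial finite abelian group), take $V:=\C^m$ with basis $e_1,\dots,e_m$, and let $\pi_1(N_j)$ act by $\chi_j$ on $\C e_j$ and trivially on the remaining $\C e_i$; iterated Mayer--Vietoris again gives acyclicity.

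\textbf{The main obstacle} is producing the acyclic representation $\alpha_0$ on the non-rational-homology-sphere summand $N_0$, especially when $N_0$ has a non-trivial JSJ decomposition mixing hyperbolic and graph-manifold pieces. This is precisely where the deep geometric input cited before the theorem enters: one exploits virtual specialness to pass to a finite cover with strong residual properties (e.g.\ RFRS), then chooses a unitary representation that restricts non-trivially to every JSJ and boundary torus so as to force acyclicity of each geometric piece and, via the Mayer--Vietoris sequence for the JSJ decomposition, of $N_0$ itself.
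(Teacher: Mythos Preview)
Your $(1)\Rightarrow(2)$ argument for the connected sum is correct and in fact cleaner than the paper's: where the paper analyzes the submodule $W=\operatorname{span}\{\alpha(g)v-v\}$ and its orthogonal complement through a long exact sequence, you go straight to $V^{\pi_1(N')}+V^{\pi_1(N'')}=V$ and kill each summand via $b_1>0$. However, your boundary step has a gap: $\chi(\partial N)=0$ does \emph{not} force the boundary components to be tori, since a sphere and a genus-two surface together also have total Euler characteristic zero. The paper handles this by capping off the sphere components to obtain $\widehat N$ with $\chi(\partial\widehat N)<0$, hence $\chi(\widehat N,\alpha)<0$; since $\widehat N$ has the homotopy type of a $2$-complex this forces $H_1(\widehat N;V)\ne 0$, and $H_1(N;V)\cong H_1(\widehat N;V)$ because the two spaces share a fundamental group.

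Your $(2)\Rightarrow(1)$ direction has more serious problems. First, the $\operatorname{SL}(2,\C)$ holonomy lift is \emph{not} unitary, so it cannot serve as your $\alpha_0$ in the hyperbolic case. Second, your treatment of the case ``$N_0$ absent'' (all prime summands are rational homology spheres) is broken: you claim each $N_j$ admits a non-trivial character because $H_1(N_j;\Z)$ is a non-trivial finite group, but this fails for integral homology spheres such as the Poincar\'e sphere or any hyperbolic $\Z$-homology sphere. Even when characters exist, there is no reason that $H_1(N_j;\C_{\chi_j})=0$; this is equivalent to the cyclic cover determined by $\chi_j$ again being a rational homology sphere, which is false in general. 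The paper does not take this shortcut: it treats \emph{every} prime $3$-manifold $N_1\ne S^3$ (whether or not it is a rational homology sphere) by the same case analysis --- virtual fibering for irreducible non-closed-graph manifolds (via Agol, Liu, Przytycki--Wise, Wise), the free $SO(4)$-action for spherical manifolds, a virtual $S^1$-bundle argument for closed Seifert fibered manifolds, and the results of \cite{Na14} for the remaining closed graph manifolds --- and then reduces the connected sum $N_1\# N_2$ (with $N_2$ a rational homology sphere) to $N_1$ by collapsing $N_2\setminus D$ to a ball and applying the five-lemma to the resulting map of Mayer--Vietoris sequences. Your sketch of ``Mayer--Vietoris along the JSJ decomposition'' is not how the paper proceeds; the key step for the generic prime piece is virtual fibering followed by a one-dimensional character on the fibered cover, not a torus-by-torus gluing argument.
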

 
The theorem says in particular that a finite volume hyperbolic 3-manifold admits a unitary representation that makes it acyclic. In fact, we will see in the proof of Theorem \ref{mainthm} that the unitary representation can be chosen such that it  factors through a finite group. This representation
is thus quite different from a lift of the holonomy representation.

The paper is organized as follows. In Section \ref{section:twihom} we recall the definition of twisted homology and cohomology and we recall several of the key properties of twisted invariants.
In Section \ref{section:acyclicsl2c} we show that the results of  Raghunatha  \cite{Ra65} and Menal-Ferrer  and Porti \cite[Section~2]{MFP14} imply that hyperbolic 3-manifolds can be made acyclic using a lift of the holonomy representation. In Section \ref{section:12} we show the (1) $\Rightarrow$ (2) implication of the main theorem, and we show the reverse implication in Section \ref{section:21}.

\subsection*{Conventions.}
Throughout this paper all 3--manifolds are understood to be  compact,
connected and orientable. Furthermore all groups are understood to be finitely generated.
Also, all topological spaces are assumed to be path connected and locally path connected. In particular, this ensures that the universal cover is always defined.

\subsection*{Acknowledgement.}
The first author wishes to thank the roads of Newfoundland for giving him ample time to think about mathematics.
Both authors gratefully acknowledge the support provided by the SFB 1085 `Higher Invariants' at the University of Regensburg, funded by the Deutsche Forschungsgemeinschaft DFG. 

\section{Twisted homology and cohomology groups}\label{section:twihom}

\subsection{Unitary representations}
Let $R$ be a commutative ring.
An \emph{$R$-representation} (or short \emph{representation}) for a group $\pi$ is a finite-dimensional  $R$-module $V$ together with a homomorphism $\a\colon \pi\to \aut_R(V)$.  Equivalently, an \emph{$R$-representation} is a $(\Z[\pi],R)$-bimodule.
Sometimes, when we want to specify the action of $\pi$ on $V$ we denote the  $(\Z[\pi],R)$-bimodule by $V_\a$.

We  say that a complex representation $\a\colon\pi\to \aut_\C(V)$ is \emph{unitary} if $V$ admits a positive-definite hermitian inner product with respect to which $\pi$ acts via isometries.  
By Sylvester's theorem all positive-definite hermitian inner products  on $\C^n$ are isometric. It thus follows that given any $n$-dimensional unitary representation $\a\co \pi\to \aut_\C(V)$ there exists an isomorphism $f\co V\to \C^n$ such that $f\circ \a(g)\circ f^{-1}\in U(n)$ for all $g\in \pi$. Conversely, a homomorphism $\a\co \pi\to U(n)$ evidently gives rise to a unitary  representation in the above sense.

Let $\pi$ be a group and let $\pi'\subset \pi$ be a  subgroup. If $V$ is a $(\Z[\pi'],\C)$-bimodule,
then we endow $\Z[\pi]\otimes_{\Z[\pi']}V$ with the $\Z[\pi]$-left module structure given by left multiplication.
We can thus view $\Z[\pi]\otimes_{\Z[\pi']}V$ as a $(\Z[\pi],\C)$-bimodule and we refer to it as the \emph{induced  $(\Z[\pi],\C)$-bimodule}. 

We have the following elementary lemma:

\begin{lemma}\label{lem:induced}
Let $\pi$ be a group, let $\pi'\subset \pi$ be a finite-index  subgroup and let $V'$ be a $(\Z[\pi'],\C)$-bimodule.
We denote by $V=\Z[\pi]\otimes_{\Z[\pi']}V'$ the induced $(\Z[\pi],\C)$-bimodule.
Then the following statements hold:
\bn
\item We have 
\[ \dim_\C(V)=[\pi:\pi']\cdot \dim_\C(V').\]
\item If $\pi'\to \aut_\C(V')$ factors through a finite group, then so does $\pi\to \aut_\C(V)$.
\item If  $\pi'\to \aut_\C(V')$ is a unitary representation, then so is  $\pi\to \aut_\C(V)$.
\en
\end{lemma}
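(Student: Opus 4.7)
The plan is to choose coset representatives once and for all, identify $V$ with a direct sum indexed by those representatives, and then read off each of the three properties from that decomposition. Concretely, let $g_{1},\dots,g_{n}$ be a complete set of left coset representatives for $\pi'$ in $\pi$, where $n=[\pi:\pi']$. Since $\Z[\pi]$ is free of rank $n$ as a right $\Z[\pi']$-module (with basis $g_{1},\dots,g_{n}$), we get
\[
V \;=\; \bigoplus_{i=1}^{n} g_{i}\otimes V'
\]
as a complex vector space, each summand being canonically isomorphic to $V'$. Statement~(1) is immediate from this. For the later parts, the action of $g\in\pi$ is governed by the formula $gg_{i}=g_{\sigma(g)(i)}h_{i}(g)$ with $\sigma(g)\in S_{n}$ and $h_{i}(g)\in\pi'$; accordingly $g\cdot(g_{i}\otimes v)=g_{\sigma(g)(i)}\otimes h_{i}(g)v$, so $g$ permutes the summands and acts inside each by an element of $\pi'$.

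For (2), assume the given representation $\a'\co\pi'\to\aut_\C(V')$ factors as $\pi'\to G\to\aut_\C(V')$ with $G$ finite. Set $K:=\ker(\pi'\to G)$; this is a finite-index subgroup of $\pi'$, hence of $\pi$. The plan is to replace $K$ with its normal core $N:=\bigcap_{g\in\pi}gKg^{-1}$, which is a finite-index \emph{normal} subgroup of $\pi$, and then to check that $N$ lies in the kernel of the induced representation. For $n\in N$ and any $i$, we have $n g_{i}=g_{\sigma(n)(i)}h_{i}(n)$ with $h_{i}(n)\in\pi'$; but $h_{i}(n)=g_{\sigma(n)(i)}^{-1}ng_{i}\in\pi'\cap gKg^{-1}\subset K$ for suitable $g$, so $h_{i}(n)$ acts trivially on $V'$. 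Combined with a counting argument showing $\sigma(n)$ is the identity (since $N$ is contained in $\pi'$: one may arrange this by replacing $N$ with $N\cap\pi'$, still of finite index and normal in $\pi$), this gives $n\cdot(g_{i}\otimes v)=g_{i}\otimes v$, so the induced representation factors through the finite group $\pi/N$.

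For (3), suppose $\langle\cdot,\cdot\rangle'$ is a positive-definite hermitian form on $V'$ with respect to which every element of $\pi'$ acts by an isometry. Define a form on $V$ by declaring the decomposition $V=\bigoplus_{i}g_{i}\otimes V'$ to be orthogonal and setting $\langle g_{i}\otimes v,\,g_{i}\otimes w\rangle:=\langle v,w\rangle'$ on each summand. This is plainly positive-definite and hermitian. Using the formula $g\cdot(g_{i}\otimes v)=g_{\sigma(g)(i)}\otimes h_{i}(g)v$, the action of $g$ sends the $i$-th summand isometrically onto the $\sigma(g)(i)$-th summand (the $h_{i}(g)$-factor acting as an isometry on $V'$), and sends orthogonal summands to orthogonal summands. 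Hence $g$ is unitary on $V$.

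The main point to watch is the normal-core step in (2): one has to ensure the finite-index subgroup found in $\pi'$ is replaced by one that is normal in the larger group $\pi$ and still lies in the kernel of the induced representation. Everything else is routine bookkeeping with coset representatives, and none of the arguments depends on the particular choice of $g_{1},\dots,g_{n}$.
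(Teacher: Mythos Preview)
Your argument follows the same route as the paper's: fix coset representatives, decompose $V=\bigoplus_i g_i\otimes V'$, read off the dimension, pass to a normal core for (2), and for (3) declare the summands orthogonal with the given form on each. One small wrinkle in your write-up of (2): the claim $h_i(n)=g_{\sigma(n)(i)}^{-1}ng_i\in\pi'\cap gKg^{-1}\subset K$ is not justified as stated (the inclusion $\pi'\cap gKg^{-1}\subset K$ fails in general, and the argument implicitly needs $\sigma(n)(i)=i$ before you can identify $h_i(n)$ as a conjugate of $n$). The clean order is: since $N\subset K\subset\pi'$ (take $g=e$ in the core) and $N$ is normal in $\pi$, one has $g_i^{-1}ng_i\in N\subset\pi'$, hence $ng_i\in g_i\pi'$, so $\sigma(n)=\mathrm{id}$ and then $h_i(n)=g_i^{-1}ng_i\in N\subset K$; your parenthetical about replacing $N$ by $N\cap\pi'$ is then unnecessary (and would not in general be normal in $\pi$ anyway). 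With that reordering your (2) is fine and in fact slightly tidier than the paper's sketch, which intersects $\ker(\alpha')$ with the core of $\pi'$ rather than taking the core of $\ker(\alpha')$ directly.
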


The lemma is well-known. Therefore we just give a quick sketch of the argument.

\begin{proof}
We write $d=[\pi:\pi']$ and we pick coset representatives $g_1,\dots,g_d$ of $\pi/\pi'$. Furthermore we pick a basis $v_1,\dots,v_n$ for the complex vector space $V'$. 
\bn
\item It is straightforward to see that $g_i\otimes v_j$, $i\in \{1,\dots,d\}$ and $j\in \{1,\dots,n\}$ is a basis for the complex vector space $V:=\Z[\pi]\otimes_{\Z[\pi']}V'$.
\item We denote by $\pi'':=\cap_{g\in \pi}g\pi'g^{-1}$ the core of $\pi'$. Since $\pi'$ is finite-index in $\pi$ it follows from standard arguments that $\pi''$ is finite-index in $\pi$. If $\Gamma':=\ker(\pi'\to \aut_\C(V'))$ has finite-index in $\pi'$, then it is also straightforward to see that  $\Gamma'\cap \pi''$ has finite-index in $\pi$ and that $\Gamma'\cap \pi''$ is contained in $\ker(\pi\to \aut_\C(V'))$.
\item If $\ll\,,\,\rr$ denotes a positive-definite hermitian inner product on $V'$ that is preserved by $\pi'$, then it is straightforward to verify that 
\[  \ba{ccl} \Z[\pi]\otimes_{\Z[\pi']}V'\times \Z[\pi]\otimes_{\Z[\pi']}V'&\to & \C\\
\left(\sum_{i=1}^d g_i\otimes s_i,\sum_{i=1}^d g_i\otimes t_i \right)&\mapsto & \sum_{i=1}^d\ll s_i,t_i\rr\ea \]
is a positive-definite hermitian inner product on $V$ that is preserved by the $\pi$-action.
\en
\end{proof}

\subsection{The definition of twisted homology and cohomology groups}

Let $X$ be a  topological space and let $Y\subset X$ be a subset.  We write $\pi=\pi_1(X)$. 
Let $R$ be a commutative ring and let   $\a\colon \pi\to \aut_R(V)$ be an $R$-representation.
We denote the universal covering of $X$ by $p\co \ti{X}\to X$ and we write $\ti{Y}:=p^{-1}(Y)$.
There exists a canonical left $\pi$--action on the universal cover $\ti{X}$ given by deck transformations
and we can therefore consider
the  complex
\[ C^*(X,Y;V):=\hom_{\Z[\pi]}(C_*(\tilde{X},\ti{Y};\Z),V)\]
of $R$-modules
and the \emph{twisted cohomology modules}
\[
H^i(X,Y;V) := H_i(\hom_{\Z[\pi]}(C_*(\tilde{X},\ti{Y};\Z),V)).\]

We
can also view the cellular chain complex $C_*(\ti{X},\ti{Y};\Z)$  as a right $\Z[\pi]$-module by defining $\s \cdot
g:=g^{-1}\s$ for a chain $\s$ and an element $g\in \pi$. We then consider the chain complex
\[ C_*(X,Y;V):=C_*(\tilde{X},\ti{Y};\Z)\otimes_{\Z[\pi]}V\]
of $R$-modules 
and the \emph{twisted homology modules}
\[
H_i(X,Y;V) := H_i(C_*(\tilde{X},\ti{Y};\Z)\otimes_{\Z[\pi]}V).\]
As usual we will drop $Y$ from the notation when $Y=\emptyset$. 

\subsection{Basic results about twisted homology and cohomology groups}
Throughout this section $K$ denotes a field. Given a finite CW-complex $X$ and a representation $\a\colon \pi_1(X)\to \aut_K(V)$ we write
\[ \chi(X,\a):=\sum_{i} (-1)^{i} \dim H_i(X;V).\]
We start out with the following well-known lemma.

\begin{lemma}\label{lem:eulermultiplicative}
Let $X$ be  a finite CW-complex $X$ and let  $\a\colon \pi_1(X)\to \aut_K(V)$ be a representation. Then
\[ \chi(X,\a)=\dim V\cdot \chi(X).\]
\end{lemma}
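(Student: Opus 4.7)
The plan is to compute the Euler characteristic directly at the chain level, exploiting the fact that the cellular chain complex of the universal cover is free over $\Z[\pi]$ on the cells of $X$.

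First I would fix a finite CW-structure on $X$ and lift each $i$-cell to a chosen $i$-cell in $\tilde X$. A standard fact is that $C_i(\tilde X;\Z)$ is then a free left $\Z[\pi]$-module with basis the chosen lifts; in particular its rank equals $n_i$, the number of $i$-cells of $X$. Using the identification $\Z[\pi]\otimes_{\Z[\pi]} V \cong V$ (viewing $C_*(\tilde X;\Z)$ as a right $\Z[\pi]$-module via $\sigma\cdot g := g^{-1}\sigma$ as in the definition), I would conclude that the twisted chain module $C_i(X;V) = C_i(\tilde X;\Z)\otimes_{\Z[\pi]} V$ is isomorphic as a $K$-vector space to $V^{n_i}$, hence $\dim_K C_i(X;V) = n_i\cdot \dim_K V$.

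Next I would invoke the elementary linear-algebra fact that for any bounded chain complex of finite-dimensional $K$-vector spaces, the alternating sum of the dimensions of the chain modules equals the alternating sum of the dimensions of the homology modules. Applying this to $C_*(X;V)$ yields
\[
\chi(X,\alpha)=\sum_i (-1)^i \dim_K H_i(X;V)=\sum_i (-1)^i \dim_K C_i(X;V)=\sum_i (-1)^i n_i\cdot \dim_K V.
\]
Since $\chi(X)=\sum_i (-1)^i n_i$, the right-hand side equals $\dim_K V\cdot \chi(X)$, which is the desired identity.

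There is no real obstacle here; the only subtle point is keeping the left/right module conventions straight so that the tensor product $C_i(\tilde X;\Z)\otimes_{\Z[\pi]} V$ really does collapse to $V^{n_i}$ as a $K$-vector space. This is guaranteed by the convention $\sigma\cdot g := g^{-1}\sigma$ already fixed in the previous subsection, so nothing further needs to be checked.
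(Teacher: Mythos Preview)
Your argument is correct and is precisely the paper's approach: pass from homology to chains via the alternating-sum identity, then use that $C_i(X;V)\cong V^{n_i}$ to factor out $\dim V$. You have merely spelled out in more detail the step the paper compresses into the single equality $\sum_i(-1)^i\dim C_i(X;V)=\dim V\cdot\sum_i(-1)^i\dim C_i(X;K)$.
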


\begin{proof}
We have 
\[ \ba{rcl} \chi(X,\a)&=&
\sum_{i} (-1)^{i} \dim H_i(X;V)\\[2mm]
&=&\sum_{i} (-1)^{i} \dim C_i(X;V)\\[2mm]
&=&\dim V\cdot \sum_{i} (-1)^{i} \dim C_i(X;\C)=\dim V\cdot \chi(X).\ea\]
\end{proof}

The following lemma follows easily from the definitions. 

\begin{lemma}\label{lem:trivialrep}
Let $X$ be a  topological space and let  $\a\colon \pi_1(X)\to \aut_{\C}(V)$ be a trivial complex representation. Then for any $i$ we have an isomorphism
\[ H_i(X;V)\cong H_i(X;\Z)\otimes_{\Z}V.\]
\end{lemma}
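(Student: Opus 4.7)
The plan is to unwind the definition of twisted homology in the special case where $\a$ is trivial and reduce the computation to ordinary singular homology with coefficients, then invoke flatness to commute tensor product with homology.

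First I would observe that since $\a$ is the trivial representation, the left $\Z[\pi]$-module structure on $V$ (where $\pi=\pi_1(X)$) factors through the augmentation map $\epsilon\co \Z[\pi]\to \Z$. Consequently, as $(\Z[\pi],\Z)$-bimodules one has $V\cong \Z\otimes_\Z V$ where $\Z$ carries the augmentation action of $\Z[\pi]$. Substituting this into the definition of $C_*(X;V)$ and using associativity of the tensor product, I would identify
\[ C_*(\tilde X;\Z)\otimes_{\Z[\pi]} V \;\cong\; \bigl(C_*(\tilde X;\Z)\otimes_{\Z[\pi]} \Z\bigr)\otimes_\Z V \;\cong\; C_*(X;\Z)\otimes_\Z V,\]
where the last identification is the standard fact that the cellular (or singular) chain complex of the base is obtained from that of the universal cover by coinvariants under the deck action, i.e.\ by tensoring over $\Z[\pi]$ with the trivial module $\Z$.

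Next I would note that $V$ is a $\C$-vector space, hence in particular a $\Q$-vector space, hence a flat $\Z$-module. Therefore $-\otimes_\Z V$ is an exact functor on $\Z$-modules, and in particular it commutes with taking homology of a chain complex of $\Z$-modules. Applying this to the complex $C_*(X;\Z)$ yields
\[ H_i(X;V) \;=\; H_i\bigl(C_*(X;\Z)\otimes_\Z V\bigr) \;\cong\; H_i(X;\Z)\otimes_\Z V,\]
which is the desired isomorphism.

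There is really no main obstacle here; the only thing to be careful about is the bookkeeping in the first step, namely verifying the isomorphism $C_*(\tilde X;\Z)\otimes_{\Z[\pi]}\Z\cong C_*(X;\Z)$ and checking that the $\Z[\pi]$-action on $V$ in the trivial-representation case really does factor through $\epsilon$, so that the associativity manipulation is legitimate. Once those are in place, flatness of $V$ over $\Z$ finishes the proof.
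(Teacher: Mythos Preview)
Your argument is correct. The paper does not actually give a proof of this lemma, stating only that it ``follows easily from the definitions''; your proposal is precisely the natural unwinding of those definitions together with flatness of $V$ over $\Z$, so it is in line with what the paper has in mind.
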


The next lemma says in particular that the zeroth and first twisted homology groups only depend on the fundamental group and the representation. The proof of the lemma is also standard, and left to the reader.

\begin{lemma}\label{lem:dependsonlyonpi1}
Let $X$ be a finite CW-complex. There exists a $2$-connected map 
$f \colon X \rightarrow K(\pi_1(X),1)$
to the Eilenberg-Maclane space which induces the identity on fundamental groups.
Given a representation $\a\colon \pi_1(X)\to \aut_K(V)$ 
such a map $f$ will induce isomorphisms
\[ H_i(X; V_{\a}) \cong H_i( \pi_1(X); V_{\a}) \]
with $i = 0,1$. The right-hand side depends only on the fundamental group of $X$
and the representation $\a$.
\end{lemma}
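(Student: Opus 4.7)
The plan is to construct $f$ explicitly as an inclusion of CW-complexes, so that the comparison of twisted chain complexes in low degrees becomes transparent on the nose. Write $\pi=\pi_1(X)$.

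First, I would build $f$ by the standard cell-attaching procedure. Choose generators of $\pi_2(X)$ as a $\Z[\pi]$-module, and attach $3$-cells to $X$ along representatives of these generators; attaching cells of dimension $\geq 3$ does not affect $\pi_1$, since $\pi_1$ depends only on the $2$-skeleton. Inductively attach cells of dimensions $\geq 4$ to kill $\pi_3,\pi_4,\ldots$. The resulting CW-complex $Y$ is a $K(\pi,1)$, contains $X$ as a subcomplex, and the inclusion $f\co X\hookrightarrow Y$ induces the identity on $\pi_1$ and is $2$-connected by construction.

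Next, I would compare the twisted chain complexes in low degrees. Because $Y$ is obtained from $X$ by attaching only cells of dimension $\geq 3$, the cellular chain complexes $C_*(\ti X;\Z)$ and $C_*(\ti Y;\Z)$ of right $\Z[\pi]$-modules coincide in degrees $0,1,2$, together with the boundary maps $\partial_1$ and $\partial_2$; only $\partial_3$ (and higher) may be enlarged. Applying $-\otimes_{\Z[\pi]}V_{\a}$ preserves this coincidence. Since
\[
H_0 = C_0/\Im\partial_1 \qquad\text{and}\qquad H_1 = \ker\partial_1/\Im\partial_2
\]
involve only $C_0,C_1,C_2$ and these two boundaries, the induced map $f_*$ gives isomorphisms $H_i(X;V_{\a})\to H_i(Y;V_{\a})$ for $i=0,1$.

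Finally, I would identify $H_i(Y;V_{\a})$ with the group homology $H_i(\pi;V_{\a})$. Since $Y$ is a $K(\pi,1)$, the universal cover $\ti Y$ is contractible, so $C_*(\ti Y;\Z)$ is a free $\Z[\pi]$-resolution of the trivial module $\Z$; tensoring with $V_{\a}$ and taking homology then yields $H_i(\pi;V_{\a})$ by definition of group homology, which manifestly depends only on $\pi$ and $\a$. No step is a genuine obstacle; the only point demanding a moment's care is verifying that the inductive cell-attaching construction really produces a $K(\pi,1)$ containing $X$ with no additional $1$- or $2$-cells, which is standard.
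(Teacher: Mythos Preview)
Your argument is correct. The construction of $Y=K(\pi,1)$ by attaching cells of dimension $\geq 3$ to $X$ is the standard one, and since $\pi_1(X)\to\pi_1(Y)$ is an isomorphism the universal cover $\ti Y$ restricts over $X$ to $\ti X$, so the cellular $\Z[\pi]$-chain complexes really do coincide through degree $2$ as you claim; the identification of $H_*(Y;V_\a)$ with group homology via the free resolution $C_*(\ti Y;\Z)\to\Z$ is likewise correct.

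As for the comparison: the paper does not actually prove this lemma. It simply states that ``the proof of the lemma is also standard, and left to the reader.'' Your write-up is precisely the kind of standard argument the authors had in mind, so there is nothing to contrast.
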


Given a topological space $X$, a representation $\a\colon \pi_1(X)\to \aut_K(V)$ and a connected subspace $Y$ of $X$  a choice of a path connecting the base points induces a representation $\pi_1(Y)\to \pi_1(X)\to \aut_K(V)$ that we denote again by $\a$. Furthermore the inclusion map $Y\to X$ induces for every $i$ a map 
\[ H_i(Y;V_\a)\to H_i(X;V_\a).\]
We refer to \cite[Section~2.1]{FKm06} for details. This map  depends on the choice of a path connecting the base points. This indeterminacy can make a big difference in some circumstances, see \cite[Section~3]{FKm06}, but it will not affect us. We will therefore suppress this issue from our discussions and from our notation.

We continue with the following well-known lemma, see \cite[Section~VI.3]{HS97} for details.

\begin{lemma}\label{lem:h0}
Let $X$ be a finite CW-complex and let  $\a\colon \pi_1(X)\to \aut_K(V)$ be a $K$-representation. Then 
\[ H_0(X;V_\a)\cong  V/\{\sum_i \a(g_i)(v_i)-v_i\,|\, g_i\in \pi_1(X), v_i\in V\}. \]
Furthermore, if $Y$ is a connected subcomplex of $X$, then the inclusion  maps induce a commutative diagram
\[  \xymatrix@C2cm{H_0(Y;V_\a)\ar[d]\ar[r]^-\cong &  V/\{\sum_i \a(g_i)(v_i)-v_i\,|\, g_i\in \pi_1(Y), v_i\in V\}\ar[d]\\
 H_0(X;V_\a)\ar[r]^-\cong & V/\{\sum_i \a(g_i)(v_i)-v_i\,|\, g_i\in \pi_1(X), v_i\in V\}.} \]
\end{lemma}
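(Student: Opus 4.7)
The plan is to use right-exactness of $-\otimes_{\Z[\pi]}V$ together with the standard fact that $H_0(\tilde X;\Z)=\Z$.

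Set $\pi=\pi_1(X)$ and let $\tilde X$ denote the universal cover. Since $\tilde X$ is path-connected, the augmentation $\varepsilon\colon C_0(\tilde X;\Z)\to\Z$ realizes $H_0(\tilde X;\Z)$ as $\Z$. Under the right $\Z[\pi]$-action $\sigma\cdot g=g^{-1}\sigma$ from the definition of twisted homology, the induced action on $H_0(\tilde X;\Z)=\Z$ is trivial, since $\varepsilon(g^{-1}\tilde y)=\varepsilon(\tilde y)=1$ for every 0-cell $\tilde y$ of $\tilde X$. Applying the right-exact functor $-\otimes_{\Z[\pi]}V$ to the tail $C_1(\tilde X;\Z)\to C_0(\tilde X;\Z)\to\Z\to 0$ of the chain complex then yields
\[ H_0(X;V_\alpha)\;=\;\operatorname{coker}\bigl(C_1(\tilde X;\Z)\otimes_{\Z[\pi]}V\to C_0(\tilde X;\Z)\otimes_{\Z[\pi]}V\bigr)\;\cong\;\Z\otimes_{\Z[\pi]}V. \]
The right-hand side is generated by symbols $1\otimes v$ with relations $1\otimes v=(1\cdot g)\otimes v=1\otimes\alpha(g)v$, hence is canonically isomorphic to $V$ modulo the subspace generated by the elements $\alpha(g)v-v$ for $g\in\pi,\,v\in V$. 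This is the stated description of $H_0(X;V_\alpha)$.

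For the commutative diagram, the inclusion $Y\hookrightarrow X$ lifts (after the choice of base-point path) to an equivariant map $\tilde Y\to\tilde X$ with respect to the induced homomorphism $\pi_1(Y)\to\pi_1(X)$, which induces the identity on zeroth homology $H_0(\tilde Y;\Z)=\Z=H_0(\tilde X;\Z)$. Tensoring with $V$ over the respective group rings reproduces exactly the square in the statement: the two horizontal isomorphisms come from the argument just given, and the right-hand vertical map is simply the projection from $V$ modulo the smaller set of $\pi_1(Y)$-relations to $V$ modulo the larger set of $\pi_1(X)$-relations.

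I do not expect any real obstacle, as the entire argument is a direct computation. The only bookkeeping point worth flagging is the right-action convention $\sigma\cdot g=g^{-1}\sigma$: it merely replaces relations of the form $\alpha(g)v-v$ by $\alpha(g^{-1})v-v$, but these span the same subspace as $g$ ranges over $\pi$, so the final formula is unaffected.
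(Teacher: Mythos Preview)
Your argument is correct and is precisely the standard computation: apply right-exactness of $-\otimes_{\Z[\pi]}V$ to the augmentation sequence of $C_*(\tilde X;\Z)$ to identify $H_0(X;V_\a)$ with the coinvariants $\Z\otimes_{\Z[\pi]}V\cong V_\pi$, and then use naturality for the square. The paper does not actually supply a proof of this lemma; it only records it as well-known and points to \cite[Section~VI.3]{HS97}, so there is nothing to compare against beyond noting that your proof is the expected one.
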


We will also need the following lemma.

\begin{lemma} \label{lem:les}
Let  $X$ be a   topological space and let
\[ 0\to W\to V\to V/W\to 0\]
be a short exact sequence of $(\Z[\pi_1(X)],K)$-modules. Then there exists a long exact sequence 
of $K$-modules
\[ \dots \to H_i(X;W)\to H_i(X;V)\to H_i(X;V/W)\to H_{i-1}(X;W) \to \dots \]
\end{lemma}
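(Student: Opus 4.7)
The plan is to reduce the statement to the standard zig-zag lemma by producing a short exact sequence of chain complexes of $K$-modules from the given short exact sequence of coefficients.

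First I would recall that the cellular (or singular) chain complex $C_*(\tilde X;\Z)$ of the universal cover is a complex of free right $\Z[\pi_1(X)]$-modules: a $\Z[\pi_1(X)]$-basis is obtained by choosing, for each cell (respectively singular simplex) of $X$, a single lift to $\tilde X$, and all other lifts are its images under the deck transformation action. In particular, each $C_n(\tilde X;\Z)$ is a flat right $\Z[\pi_1(X)]$-module.

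Next I would apply $C_*(\tilde X;\Z)\otimes_{\Z[\pi_1(X)]}(-)$ to the short exact sequence
\[ 0\to W\to V\to V/W\to 0.\]
Because each $C_n(\tilde X;\Z)$ is free (hence flat) as a right $\Z[\pi_1(X)]$-module, this yields, in each degree $n$, a short exact sequence of $K$-modules
\[ 0\to C_n(X;W)\to C_n(X;V)\to C_n(X;V/W)\to 0,\]
and these assemble into a short exact sequence of chain complexes of $K$-modules, since the boundary maps come from $C_*(\tilde X;\Z)$ and hence commute with the coefficient homomorphisms $W\to V\to V/W$.

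Finally I would invoke the standard zig-zag lemma (the long exact homology sequence associated to a short exact sequence of chain complexes) to obtain the desired long exact sequence
\[ \dots\to H_i(X;W)\to H_i(X;V)\to H_i(X;V/W)\to H_{i-1}(X;W)\to\dots\]
in $K$-modules. The only mildly delicate point is the freeness statement in the first step, which for CW-complexes is the usual basis-by-choice-of-lift argument and for general path-connected spaces uses the same construction with singular simplices; given the conventions of the paper, this is essentially immediate and there is no real obstacle.
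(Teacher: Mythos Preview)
Your proof is correct and follows exactly the paper's approach: one observes that the short exact sequence of coefficients induces a short exact sequence of chain complexes $0\to C_*(X;W)\to C_*(X;V)\to C_*(X;V/W)\to 0$ and then applies the long exact homology sequence. You have simply spelled out the freeness argument that the paper leaves implicit.
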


\begin{proof}
The long exact sequence of the lemma is  the long exact sequence in homology corresponding to the short exact sequence 
\[ 0\to C_*(X;W)\to C_*(X;V)\to C_*(X;V/W)\to 0\]
of chain complexes.
\end{proof}

The following lemma is sometimes referred to as the Eckmann-Shapiro lemma. 

\begin{lemma}\label{lem:eckmann-shapiro}
Let $p\colon X'\to X$ be a connected finite covering of a  topological space. Let $\a\co \pi_1(X')\to \aut_\C(V')$ be a representation.
Then for any $i$ there is an isomorphism
\[  H_i(X';V)\to H_i(X;\Z[\pi_1(X)]\otimes_{\Z[\pi_1(X')]}V').\]
\end{lemma}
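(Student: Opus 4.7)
The plan is to identify the two chain complexes by the standard associativity-of-tensor-product argument, using that the universal cover of $X$ simultaneously serves as the universal cover of $X'$.

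First I would observe that since $p\colon X'\to X$ is a covering map, the universal cover $p'\colon \ti{X}\to X$ restricts to a covering of $X'$, and because $\ti{X}$ is simply connected, it is also the universal cover of $X'$. Under this identification $\pi_1(X')$ sits as a subgroup of $\pi_1(X)$ (after fixing basepoints), and the deck transformation action of $\pi_1(X')$ on $\ti{X}$ is just the restriction of the action of $\pi_1(X)$. Consequently the singular (or cellular) chain complex $C_*(\ti{X};\Z)$ is a right $\Z[\pi_1(X)]$-module that, by restriction of scalars, is also a right $\Z[\pi_1(X')]$-module.

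Next I would write down both sides. By definition,
\[ C_*(X';V')=C_*(\ti{X};\Z)\otimes_{\Z[\pi_1(X')]}V',\]
while, writing $V:=\Z[\pi_1(X)]\otimes_{\Z[\pi_1(X')]}V'$ for the induced bimodule,
\[ C_*(X;V)=C_*(\ti{X};\Z)\otimes_{\Z[\pi_1(X)]}\bigl(\Z[\pi_1(X)]\otimes_{\Z[\pi_1(X')]}V'\bigr).\]
Associativity of the tensor product, together with the canonical isomorphism $M\otimes_{\Z[\pi_1(X)]}\Z[\pi_1(X)]\cong M$ for any right $\Z[\pi_1(X)]$-module $M$, then gives a natural chain isomorphism
\[ C_*(\ti{X};\Z)\otimes_{\Z[\pi_1(X)]}\bigl(\Z[\pi_1(X)]\otimes_{\Z[\pi_1(X')]}V'\bigr)\;\cong\; C_*(\ti{X};\Z)\otimes_{\Z[\pi_1(X')]}V'.\]
Taking $H_i$ of both sides yields the desired isomorphism.

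There is essentially no obstacle here; the only point demanding any care is the identification of $\ti{X}$ as the universal cover of $X'$ and the compatibility of the two $\Z[\pi_1(X')]$-module structures on $C_*(\ti{X};\Z)$ (the intrinsic one and the one obtained by restriction from $\Z[\pi_1(X)]$). Both follow directly from the standard theory of covering spaces, so the proof is really a one-line tensor-product computation once these identifications are in place. Note that finiteness of the covering is not actually needed for this argument; it is presumably assumed only because this is the form in which the lemma will be applied later.
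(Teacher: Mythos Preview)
Your proposal is correct and matches the paper's own proof essentially line for line: the paper also identifies $\wti{X}$ as the universal cover of both $X$ and $X'$, and then uses the canonical isomorphism $C_*(\wti{X})\otimes_{\Z[\pi_1(X')]} V' \cong C_*(\wti{X})\otimes_{\Z[\pi_1(X)]}\Z[\pi_1(X)]\otimes_{\Z[\pi_1(X')]}V'$ to conclude. Your remark that the finiteness hypothesis is not actually used in the argument is also accurate.
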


\begin{proof}
We denote by $\wti{X}$ the universal cover of $X$ which is of course also the universal cover of $X'$. 
The canonical isomorphism
\[  C_*(\wti{X})\otimes_{\Z[\pi_1(X')]} V'\to   C_*(\wti{X})\otimes_{\Z[\pi_1(X)]}\Z[\pi_1(X)]\otimes_{\Z[\pi_1(X')]}V'\]
then descends to the desired isomorphism of twisted homology groups.
\end{proof}

%

\section{$\sl(2,\C)$-Representations which make a manifold acyclic}\label{section:acyclicsl2c}
In this section we will see, as claimed in the introduction, that it follows from the results of Raghunathan \cite{Ra65} and  Menal-Ferrer  and Porti \cite[Section~2]{MFP14} that any finite-volume hyperbolic 3-manifold admits an $SL(2,\C)$-representation that makes it acyclic. This result is not needed for the proof of Theorem \ref{mainthm}. We only provide the argument for completeness' sake.

First we recall the following theorem of  Raghunathan \cite{Ra65} and Menal-Ferrer  and Porti \cite{MFP12,MFP14}.

\begin{theorem}\label{thm:ramfp}
Let $N$ be a finite-volume hyperbolic 3-manifold.
\bn
\item If $N$ is closed and if  $\a\co\pi_1(N)\to \sl(2,\C)$ is any lift of the holonomy representation, then $H^i(N;\C^2_\a)=0$ for all $i$.
\item If $N$ has non-empty boundary then there exists a lift   $\a\co\pi_1(N)\to \sl(2,\C)$ of the holonomy representation such that $H^i(N;\C^2_\a)=0$
and $H^i(\partial N;\C^2_\a)=0$ for all $i$.
\en
\end{theorem}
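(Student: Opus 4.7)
The plan is to combine Raghunathan's vanishing theorem with its extension to cusped hyperbolic $3$-manifolds by Menal-Ferrer and Porti, and to extract the full acyclicity using standard cohomological bookkeeping (Euler characteristic, Poincar\'e duality, and irreducibility of the holonomy). The first step is to recall that any holonomy representation $\pi_1(N)\to PSL(2,\C)$ of an orientable hyperbolic $3$-manifold admits a lift to $\sl(2,\C)$, so the problem reduces to analysing the standard $2$-dimensional representation $\C^2_\a$. Since $\chi(N)=0$ whenever $N$ is either closed or has toroidal boundary, Lemma \ref{lem:eulermultiplicative} gives $\chi(N,\a)=0$, so the vanishing in the middle degree will follow automatically once the extreme degrees are controlled.

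For the closed case, I would first invoke the fact that the holonomy of a finite-volume hyperbolic $3$-manifold is irreducible, so $\C^2_\a$ has no nonzero $\pi_1(N)$-invariants; Lemma \ref{lem:h0} then yields $H_0(N;\C^2_\a)=0$. Poincar\'e duality together with the self-duality of the standard $\sl(2,\C)$-representation gives $H_3(N;\C^2_\a)=0$. The cited vanishing theorem now provides $H^1(N;\C^2_\a)=0$, and $H^2$ vanishes by the Euler characteristic identity.

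For the case with non-empty toroidal boundary the main new difficulty is that for a generic lift the restriction of $\a$ to a peripheral torus $T$ need not be acyclic: if both meridian eigenvalues equal $1$, then $H^*(T;\C^2_\a)$ is nonzero. Menal-Ferrer and Porti construct a lift for which the restriction to each boundary torus has non-trivial eigenvalues, so that $H^0(T;\C^2_\a)=0$ and, using $\chi(T)=0$, the full cohomology of each torus vanishes; taken over all boundary components this gives $H^*(\partial N;\C^2_\a)=0$. Combining their analogue of Raghunathan's vanishing for cusped manifolds with the long exact sequence of the pair $(N,\partial N)$, Poincar\'e--Lefschetz duality and the Euler characteristic argument from the closed case then gives $H^*(N;\C^2_\a)=0$. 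The main obstacle throughout is precisely the existence of this special lift in the cusped case: one must exploit the finite ambiguity in lifting $PSL(2,\C)\to \sl(2,\C)$ to simultaneously arrange non-degeneracy on every peripheral torus, which is the technical content of \cite[Section~2]{MFP14}; once this is granted, the rest of the argument is essentially formal bookkeeping with the tools assembled in Section \ref{section:twihom}.
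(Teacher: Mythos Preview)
Your proposal is correct and follows the same overall route as the paper: cite Raghunathan for the closed case and Menal-Ferrer--Porti for the cusped case. The paper's own proof, however, is pure citation: it invokes \cite[Corollary~2.2]{MFP12} (building on \cite{Ra65}) and \cite[Theorem~0.1]{MFP12} together with the existence of an ``acyclic spin structure'' from \cite[Section~3]{MFP14} as black boxes that already deliver $H^i=0$ for \emph{all} $i$. You instead extract only the degree-one vanishing from these sources and then reconstruct the remaining degrees by hand, using irreducibility of the holonomy for $H^0$, self-duality of the standard representation plus Poincar\'e duality for $H^3$, and the Euler-characteristic identity for $H^2$. This is a legitimate and more transparent unpacking of what the cited theorems contain, and it has the virtue of making clear exactly where each degree comes from; the paper simply absorbs that bookkeeping into the references. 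One small point of presentation: you switch between $H_i$ and $H^i$ without comment, relying implicitly on the self-duality you mention; it would be cleaner to state once that $\dim H_i(N;\C^2_\a)=\dim H^i(N;\C^2_\a)$ via the symplectic form on $\C^2$ and then work consistently in cohomology, since that is what the statement asks for.
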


\begin{proof}
The first statement is an immediate consequence of Raghunathan's vanishing Theorem \cite{Ra65}, see also \cite[Corollary~2.2]{MFP12} for more details.

Now let $N$ be a finite-volume hyperbolic 3-manifold with non-empty boundary.
Menal-Ferrer  and Porti \cite[Section~3]{MFP14} show that $N$ admits an `acyclic' spin-structure. This means that the lift $\a$ of the holonomy representation corresponding to such a spin-structure satisfies
 $H^i(\partial N;\C^2_\a)=0$ for all $i$. By  
    \cite[Theorem~0.1]{MFP12} we then also have $H^i(N;\C^2_\a)=0$.
\end{proof}

Note that this theorem is a vanishing result for twisted cohomology. The following lemma turns this into  a vanishing result for twisted homology groups.

\begin{lemma}
Let $N$ be a 3-manifold and let  $\a\co \pi_1(N)\to \sl(2,\C)$ be a representation such that all the twisted cohomology groups $H^i(N;\C_\a^2)$ and $H^i(\partial N;\C_\a^2)$ vanish. Then all the twisted homology groups $H_i(N;\C_\a^2)$ also vanish.
\end{lemma}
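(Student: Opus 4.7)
The approach is to combine the long exact sequence in twisted cohomology of the pair $(N,\partial N)$ with twisted Poincar\'e--Lefschetz duality for the $3$-manifold $N$, and then exploit the fact that the standard representation of $\sl(2,\C)$ is self-dual.

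First I would feed the hypotheses into the long exact sequence
\[ \cdots \to H^i(N,\partial N;\C^2_\a) \to H^i(N;\C^2_\a) \to H^i(\partial N;\C^2_\a) \to H^{i+1}(N,\partial N;\C^2_\a) \to \cdots \]
of the pair $(N,\partial N)$ in twisted cohomology; since $H^i(N;\C^2_\a)$ and $H^i(\partial N;\C^2_\a)$ vanish in every degree, this immediately forces $H^i(N,\partial N;\C^2_\a) = 0$ for all $i$.

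Next, since $N$ is compact and oriented, twisted Poincar\'e--Lefschetz duality produces a natural isomorphism $H^i(N,\partial N;\C^2_\a) \cong H_{3-i}(N;\C^2_{\a^*})$, where $\a^*$ denotes the contragredient representation defined by $\a^*(g) = \a(g^{-1})^T$. The remaining ingredient is that the standard representation of $\sl(2,\C)$ is self-dual: the invariant symplectic form on $\C^2$ supplies a $\pi_1(N)$-equivariant isomorphism $\C^2_\a \cong \C^2_{\a^*}$. Equivalently, one checks directly that $JAJ^{-1} = (A^{-1})^T$ for every $A \in \sl(2,\C)$, with $J = \begin{pmatrix} 0 & 1 \\ -1 & 0 \end{pmatrix}$. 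This gives $H_{3-i}(N;\C^2_{\a^*}) \cong H_{3-i}(N;\C^2_\a)$, and chaining everything together yields $H_i(N;\C^2_\a) = 0$ for all $i$.

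The main obstacle is the careful bookkeeping inherent in twisted Poincar\'e--Lefschetz duality, in particular identifying which $\pi_1(N)$-module appears on the homology side after dualizing; self-duality of the standard $\sl(2,\C)$-representation is precisely what lets one match $\a^*$ back with $\a$ and close the argument. (A less geometric alternative is to invoke the universal coefficient theorem over $\C$ at the chain level, which gives $H^i(X;V^*) \cong H_i(X;V)^*$ for any finite CW-complex $X$ and finite-dimensional complex representation $V$; applied with $V = \C^2_\a$ self-dual, this bypasses the boundary hypothesis entirely.)
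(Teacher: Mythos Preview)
Your proof is correct and follows essentially the same route as the paper: both use the long exact sequence of the pair $(N,\partial N)$ together with twisted Poincar\'e--Lefschetz duality, with the self-duality of the standard $\sl(2,\C)$-representation (via the symplectic form $J$) identifying the coefficient systems. The only cosmetic difference is the order of the steps; the paper invokes duality first in the form $H_i(N;\C^2_\a)\cong H^{3-i}(N,\partial N;\C^2_\a)$ (with the self-duality already absorbed via the invariant bilinear form) and then runs the long exact sequence, whereas you do the long exact sequence first and make the self-duality step explicit.
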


\begin{proof}
We first note that any matrix in $\sl(2,\C)$ preserves the non-singular bilinear form on $\C^2$ given by the matrix 
\[ \bp 0&1\\ -1&0\ep.\]
It then follows from Poincar\'e duality for twisted homology and cohomology (see e.g.\ \cite{FKm06} and \cite[Section~3]{HSW10}) that $H_i(N;\C^2_{\a})\cong H^{3-i}(N,\partial N;\C^2_\a)$ for all $i$.
The lemma now follows from the long exact sequence in cohomology which implies that if all the twisted cohomology groups $H^i(N;\C_\a^2)$ and $H^i(\partial N;\C_\a^2)$ vanish, then the cohomology groups $H^i(N,\partial N;\C_\a^2)$ also vanish.
\end{proof}

\section{Proof of the main theorem}

\subsection{Proof of the main theorem: $(1) \Rightarrow (2)$}\label{section:12}

The goal of this section is to prove the implication $(1) \Rightarrow (2)$
in Theorem \ref{mainthm}. Put differently, the goal is to prove the following two propositions.

\begin{proposition}\label{mainprop23a}
If a $3$-manifold  has a  boundary component that is not a torus,  then it can not be made acyclic using a unitary representation.
\end{proposition}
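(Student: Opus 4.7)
The plan is to argue by contradiction: suppose $N$ has a non-toroidal boundary component $\Sigma$ and admits a unitary representation $\a\colon \pi_1(N)\to U(k)$ (with $k\geq 1$) that makes $N$ acyclic. The strategy is to propagate the vanishing of $H_*(N;\C^k_\a)$ to the boundary, and then derive a contradiction via the Euler characteristic computation of Lemma \ref{lem:eulermultiplicative}.

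First I will invoke twisted Poincar\'e--Lefschetz duality for the orientable $3$-manifold $N$ with unitary coefficients. The crucial property of unitary representations here is that the Hermitian inner product identifies $\C^k_\a$ with its dual (up to complex conjugation, which does not affect dimensions of the resulting (co)homology groups), so that Poincar\'e duality takes the form
\[ H_i(N;\C^k_\a)\cong H^{3-i}(N,\partial N;\C^k_\a), \]
with references available in \cite{FKm06} and \cite{HSW10} that are already cited in the excerpt. Since $H_i(N;\C^k_\a)=0$ for all $i$ by assumption, and twisted homology and cohomology of a finite CW-complex have the same complex dimension, this forces $H_i(N,\partial N;\C^k_\a)=0$ for all $i$ as well.

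Next I will feed both of these vanishings into the long exact sequence of the pair $(N,\partial N)$ with coefficients in $\C^k_\a$:
\[ \cdots \to H_{i+1}(N,\partial N;\C^k_\a)\to H_i(\partial N;\C^k_\a)\to H_i(N;\C^k_\a)\to \cdots \]
Since the outer terms vanish in every degree, $H_i(\partial N;\C^k_\a)=0$ for all $i$. Because $\partial N$ is a disjoint union of closed orientable surfaces, the twisted homology splits as a direct sum indexed by the boundary components (with $\a$ restricted via a choice of path to the basepoint, as in the discussion preceding Lemma \ref{lem:h0}); in particular $H_i(\Sigma;\C^k_\a)=0$ for all $i$. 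Lemma \ref{lem:eulermultiplicative} then gives $0=\chi(\Sigma;\a)=k\cdot \chi(\Sigma)$, forcing $\chi(\Sigma)=0$, which contradicts the hypothesis that $\Sigma$ is a closed orientable surface other than a torus.

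The main obstacle is the correct invocation of twisted Poincar\'e--Lefschetz duality for unitary coefficients; everything else is essentially bookkeeping with standard long exact sequences and the multiplicativity of the Euler characteristic. One has to be slightly careful about the conjugate or dual representation that naturally appears on the right-hand side of the duality isomorphism, but since only the vanishing of certain groups is needed and complex conjugation preserves complex dimensions, these subtleties do not affect the argument.
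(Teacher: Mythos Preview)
Your proof is correct, but it follows a genuinely different route from the paper's.

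The paper argues directly with Euler characteristics on $N$ itself, splitting into two cases according to whether $N$ has a boundary component of negative Euler characteristic. If not, there must be a sphere component, so $\chi(\partial N)>0$, hence $\chi(N)>0$ by Lemma~\ref{lem:euler}, and Lemma~\ref{lem:eulermultiplicative} rules out acyclicity. If there is a higher-genus component, the paper caps off any spherical boundary components to obtain $\widehat{N}$, observes that $H_1(N;V)\cong H_1(\widehat{N};V)$ by Lemma~\ref{lem:dependsonlyonpi1}, and then uses $\chi(\widehat{N})<0$ together with the fact that $\widehat{N}$ is homotopy equivalent to a $2$-complex to force $\dim H_1(\widehat{N};V)>0$.

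Your argument instead pushes the acyclicity from $N$ to $\partial N$ via twisted Poincar\'e--Lefschetz duality and the long exact sequence of the pair, and then applies Lemma~\ref{lem:eulermultiplicative} to the individual boundary surface $\Sigma$. This is cleaner and avoids the case split and the capping-off trick. The price is that you genuinely use unitarity (to identify $\C^k_\a$ with its dual up to conjugation in the duality step), whereas the paper's argument never uses unitarity at all and therefore actually proves the stronger statement that \emph{no} complex representation can make such an $N$ acyclic. Your remark that the conjugate/dual subtlety only affects dimensions is correct; just note that the assertion ``twisted homology and cohomology of a finite CW-complex have the same complex dimension'' holds here precisely because the representation is unitary (in general one has $\dim H^i(X;V)=\dim H_i(X;V^*)$).
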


\begin{proposition}\label{mainprop23b}
If  the connected sum of two $3$-manifolds $N\# N'$ can be made acyclic using a unitary representation, then one of the two summands is a rational homology sphere.
\end{proposition}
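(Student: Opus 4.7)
The plan is to apply a Mayer--Vietoris decomposition of $M := N \# N'$ along the separating $S^2$ in twisted homology, and then exploit the fact that, for a unitary representation, invariants and coinvariants are canonically isomorphic (so a non-trivial $H_0$ is detected by a trivial subrepresentation). Write $M = N_0 \cup_{S^2} N_0'$ with $N_0 = N \setminus \inter(B^3)$ and $N_0' = N' \setminus \inter(B^3)$, and take a unitary $\alpha \colon \pi_1(M) \to U(V)$ with $n := \dim_\C V \geq 1$ that makes $M$ acyclic. Since $\pi_1(S^2) = 1$, the restricted representation on $S^2$ is trivial, so $H_i(S^2; V) \cong V$ for $i \in \{0,2\}$ and vanishes otherwise.

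The first step is to feed $H_*(M; V) = 0$ into the twisted Mayer--Vietoris long exact sequence. The portion in degrees $1$ and $2$ yields $H_1(N_0; V) = H_1(N_0'; V) = 0$, while the portion in degrees $0$ and $1$ yields a short exact sequence $0 \to V \to H_0(N_0;V) \oplus H_0(N_0';V) \to 0$, i.e.\ an isomorphism. Since $\pi_1(N_0) = \pi_1(N)$ and $\pi_1(N_0') = \pi_1(N')$, Lemma~\ref{lem:dependsonlyonpi1} translates the first conclusion into $H_1(N; V) = H_1(N'; V) = 0$, and Lemma~\ref{lem:h0} identifies the direct summands on the right with the coinvariants $V_{\pi_1(N)}$ and $V_{\pi_1(N')}$.

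The second step is to use unitarity to replace coinvariants by invariants. For any unitary $G$-representation on $V$, the subspace $\op{span}\{(\alpha(g)-1)v : g \in G,\, v \in V\}$ is precisely the orthogonal complement of $V^G$, because $\langle (\alpha(g)-1)v, w\rangle = \langle v, (\alpha(g^{-1})-1)w\rangle$ vanishes for all $v, g$ exactly when $w \in V^G$. Hence the natural projection $V \to V_G$ restricts to an isomorphism $V^G \xrightarrow{\cong} V_G$. Writing $W_N := V^{\pi_1(N)}$ and $W_{N'} := V^{\pi_1(N')}$, the isomorphism of the previous paragraph gives $\dim W_N + \dim W_{N'} = n \geq 1$, so without loss of generality $W_N \neq 0$. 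Now unitarity also gives a $\pi_1(N)$-invariant splitting $V = W_N \oplus W_N^\perp$, so $H_1(N; V) \cong H_1(N; W_N) \oplus H_1(N; W_N^\perp)$ forces $H_1(N; W_N) = 0$. Since $\pi_1(N)$ acts trivially on $W_N$, Lemma~\ref{lem:trivialrep} identifies $H_1(N; W_N)$ with $H_1(N; \Z) \otimes_\Z W_N$; as $W_N \neq 0$ this forces $H_1(N; \C) = 0$, i.e.\ $b_1(N) = 0$.

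The final step argues that $N$ must be closed. If $N$ had non-empty boundary, then $\partial M = \partial N \sqcup \partial N'$ would contain that boundary, so by Proposition~\ref{mainprop23a} the components of $\partial N$ would all be tori; but then half-lives-half-dies gives $b_1(N) \geq \tfrac{1}{2} b_1(\partial N) \geq 1$, contradicting $b_1(N) = 0$. Hence $N$ is closed, and $b_1(N) = 0$ means $N$ is a rational homology sphere, as desired. The only non-formal ingredient, and the one place where unitarity is essential rather than cosmetic, is the identification of coinvariants with invariants in the second step; everything else is routine Mayer--Vietoris bookkeeping together with the splitting of $V$ off its trivial $\pi_1(N)$-subrepresentation.
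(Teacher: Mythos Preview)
Your argument is correct, and the overall architecture—Mayer--Vietoris along the separating $S^2$, then using unitarity to pass from coinvariants $V_{\pi_1(N)}$ to invariants $V^{\pi_1(N)}$—is the same as the paper's. The difference is in how you exploit the non-trivial invariant subspace $W_N=V^{\pi_1(N)}$. The paper runs the long exact sequence of Lemma~\ref{lem:les} associated to $0\to W\to V\to V/W\to 0$ (with $W$ the span of the $(\alpha(g)-1)v$'s), obtains the inequality $b_1(N)\cdot\dim W^\perp\le\dim W$, and then has to split into two cases ($b_1(N)=0$ versus $\dim H_0(N;W)=\dim W$, the latter forcing the whole representation to be trivial on $\pi_1(N)$). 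You instead use the unitary splitting $V=W_N\oplus W_N^\perp$ as $\pi_1(N)$-modules to read off $H_1(N;W_N)=0$ directly from $H_1(N;V)=0$, and then conclude $b_1(N)=0$ immediately since $W_N\ne 0$ carries the trivial action. This is cleaner: it bypasses the dimension-counting inequality and the case split entirely. Your endgame (half-lives--half-dies to rule out toroidal boundary once $b_1(N)=0$) is exactly what the paper's ``elementary Poincar\'e duality and Euler characteristic argument'' amounts to.
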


Before we give a proof of Proposition \ref{mainprop23a}, we recall the following well-known lemma.

\begin{lemma}\label{lem:euler}
Let $M$ be a $3$-manifold, then $\chi(M)=\frac{1}{2}\chi(\partial M)$.
\end{lemma}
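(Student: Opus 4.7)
The plan is to use the doubling trick. Let $DM := M \cup_{\partial M} M$ be the closed $3$-manifold obtained by gluing two copies of $M$ along their common boundary $\partial M$. Since $DM$ is a closed odd-dimensional manifold, Poincar\'e duality together with universal coefficients forces $\dim H_i(DM;\Q) = \dim H_{3-i}(DM;\Q)$ for all $i$, so its alternating sum vanishes, i.e., $\chi(DM)=0$.

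Next I would apply inclusion--exclusion for the Euler characteristic, which comes from the Mayer--Vietoris long exact sequence associated to the covering of $DM$ by its two copies of $M$ with intersection $\partial M$. This yields
\[ \chi(DM) \;=\; \chi(M) + \chi(M) - \chi(\partial M) \;=\; 2\chi(M) - \chi(\partial M). \]
Combined with $\chi(DM)=0$, this gives $\chi(M) = \tfrac{1}{2}\chi(\partial M)$, as required.

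There is essentially no obstacle here; the only thing to be careful about is verifying that $DM$ is a genuine closed manifold (so that Poincar\'e duality applies) and that Mayer--Vietoris is available for this decomposition, both of which are standard. An alternative route, if one preferred to avoid the doubling, would be to combine the long exact sequence of the pair $(M,\partial M)$, which yields $\chi(M,\partial M) = \chi(M) - \chi(\partial M)$, with Poincar\'e--Lefschetz duality $H_i(M,\partial M;\Q) \cong H^{3-i}(M;\Q)$ to obtain $\chi(M,\partial M) = -\chi(M)$; the two identities together again give $2\chi(M) = \chi(\partial M)$.
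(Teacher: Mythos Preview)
Your proof is correct. Your primary argument---the doubling trick combined with $\chi(DM)=0$ and Mayer--Vietoris---is a genuinely different route from the paper's, which instead works directly with the pair $(M,\partial M)$: the paper uses Poincar\'e--Lefschetz duality to get $\chi(M,\partial M)=-\chi(M)$ and the additivity $\chi(M)=\chi(M,\partial M)+\chi(\partial M)$ from the long exact sequence of the pair. In other words, the paper's proof is exactly the ``alternative route'' you sketch at the end. Both arguments are equally short and standard; your doubling approach has the mild advantage of only invoking Poincar\'e duality for a closed manifold rather than the relative (Lefschetz) version, at the cost of needing to know that the double is a closed manifold and that Mayer--Vietoris applies to this decomposition.
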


\begin{proof}
It is a consequence of Poincar\'e duality and the universal coefficient theorem that 
$\chi(M,\partial M)=-\chi(M)$. It  follows from the definitions that $\chi(M)=\chi(M,\partial M)+\chi(\partial M)$. Combining these two equalities gives the desired statement.
\end{proof}

Now we turn to the proof of Proposition \ref{mainprop23a}.

\begin{proof}[Proof of Proposition \ref{mainprop23a}]
Let  $N$ be a $3$-manifold that has a  boundary component that is not a torus. Let $\a\co \pi_1(N)\to \aut_\C(V)$ be a representation.
We first consider the case that $N$ has no boundary component of negative Euler characteristic. This implies that all boundary components of $N$ are tori and spheres, and by assumption $N$ has at least one boundary component that is a sphere. Thus it follows that $\chi(\partial N)>0$, which by Lemma \ref{lem:euler} implies that $\chi(N)>0$. 
This in turn implies by Lemma \ref{lem:eulermultiplicative} that 
$\chi(\widehat{N},\a)>0$. In particular we see that $\a$ does not make $N$ acyclic.

We now turn to the case that $N$ has at least one boundary component of negative Euler characteristic.
We denote by $\widehat{N}$ the $3$-manifold that is given by capping off all spherical boundary components of $N$ with a $3$-ball.
The inclusion $N\to \widehat{N}$ induces an isomorphism of fundamental groups and we equip $\widehat{N}$ with the representation $\pi_1(\widehat{N})\xleftarrow{\cong} \pi_1(N)\to \gl(k,\C)$. By Lemma \ref{lem:dependsonlyonpi1} we have $H_1({N};V)\cong H_1(\widehat{N};V)$.

By assumption the manifold $N$, and thus also $\widehat{N}$, has a boundary component of negative Euler characteristic. Since $\widehat{N}$ has no boundary component of positive Euler characteristic it follows that $\chi(\partial \widehat{N})<0$. As above this implies that 
$\chi(\widehat{N},\a)<0$. Since $\widehat{N}$ has non-trivial boundary, it is homotopy equivalent to a 2-complex. This implies that $H_i(\widehat{N};V)=0$ for $i\geq 3$. It thus follows from 
$\chi(\widehat{N},\a)<0$ that $\dim H_1(\widehat{N};V)>0$.
By the above this implies that also $\dim H_1({N};V)>0$. 
\end{proof}

\begin{proof}[Proof of Proposition \ref{mainprop23b}]
Let $N\# N'$ be a connected sum of two $3$-manifolds $N$ and $N'$  that can be made acyclic using a unitary representation $\a\co \pi_1(N\# N')\to \aut_\C(V)$. In the following we denote by $D\subset N$ and $D'\subset N'$  two open 3-balls and we identify $N\# N'$ with $(N\sm D)\cup_{S^2} (N'\sm D')$. 
By Proposition \ref{mainprop23a} we already know that the boundary of $N\# N'$ is either closed or toroidal.
Therefore the same holds for $N$ and $N'$.

Since $H_*(N\# N';V)=0$ it follows from the Mayer-Vietoris sequence for homology with twisted coefficients corresponding to the decomposition $(N\sm D)\cup_{S^2} (N'\sm D')$ that for any $i$ we have isomorphisms
\be \label{equ:iso1} H_i(S^2;V) \xrightarrow{\cong} H_i(N\sm D;V) \oplus H_i(N'\sm D';V).\ee
Furthermore it follows from Lemma \ref{lem:dependsonlyonpi1} that for  $i=0,1$  we have isomorphisms
\be \label{equ:iso2} H_i(N\sm D;V)\cong H_i(N;V)\mbox{ and } H_i(N'\sm D';V)\cong H_i(N';V).\ee
By Lemma \ref{lem:trivialrep} we have  $ H_1(S^2;V)=0$ and $ H_0(S^2;V)\cong V$.
It follows from (\ref{equ:iso1}) and (\ref{equ:iso2}) that $H_1(N;V)=H_1(N';V)=0$ and  that at least one of 
$H_0(N;V)$ and $H_0(N';V)$ has dimension at least $\frac{1}{2}\dim V$. 
Without loss of generality we can assume that $\dim H_0(N;V)\geq \frac{1}{2}\dim V$. 
We will show that $N$ is in fact a rational homology sphere.

We write 
\[ W= \left\{ \sum_{j=1}^n \a(g_j)v_j-v_j\,|\,g_1,\dots,g_n\in \pi_1(N)\mbox{ and }v_1,\dots,v_n\in V\right\}.\]
 We continue with the following claim:

\begin{claim}\mbox{}
\bn
\item The subspace  $W$ is  a $(\pi_1(N),\C)$-submodule of $V$.
\item The subspace $W^\perp\subset V$ is a  $(\pi_1(N),\C)$-submodule of $V$ such that $\pi_1(N)$ acts trivially on $W^\perp$.
\item The projection map $V\to V/W$ descends to an isomorphism $W^\perp\to V/W$.
\en
\end{claim}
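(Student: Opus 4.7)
The plan is to use unitarity crucially: fix a positive-definite Hermitian inner product $\ll\,,\,\rr$ on $V$ with respect to which $\pi_1(N)$ acts by isometries, available by the assumption that $\a$ is unitary. The natural candidate supplying the splitting in part (3) is then the orthogonal complement $W^\perp$, and all three parts will follow from standard linear algebra once (1) is established.

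For (1), I would verify closure of $W$ under the $\pi_1(N)$-action via the direct identity
\[ \a(h)\bigl(\a(g)v - v\bigr) \;=\; \bigl(\a(hg)v - v\bigr) \;-\; \bigl(\a(h)v - v\bigr), \]
valid for all $h, g \in \pi_1(N)$ and $v \in V$. Since differences of this form generate $W$ and $\a(h)$ is $\C$-linear, this shows $\a(h) W \subseteq W$ for every $h$.

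For (2), invariance of $\ll\,,\,\rr$ under $\pi_1(N)$ implies that the orthogonal complement of any $\pi_1(N)$-submodule is again a $\pi_1(N)$-submodule; combined with (1), this makes $W^\perp$ a submodule. For the triviality of the action on $W^\perp$, the key observation is that by the very definition of $W$, the element $\a(g)w - w$ lies in $W$ for every $w \in V$ and every $g \in \pi_1(N)$. If in addition $w \in W^\perp$, then $\a(g)w \in W^\perp$ by $\pi_1(N)$-invariance of $W^\perp$, hence $\a(g)w - w \in W \cap W^\perp = \{0\}$, i.e.\ $\a(g)w = w$.

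For (3), finite-dimensionality together with positive-definiteness of $\ll\,,\,\rr$ yields the orthogonal decomposition $V = W \oplus W^\perp$. The restriction of the projection $V \to V/W$ to $W^\perp$ then has kernel $W \cap W^\perp = 0$ and image equal to all of $V/W$, so it is the desired isomorphism. There is no genuine obstacle here; unitarity does all the work. The only step worth flagging is the clean observation in (2) that $\a(g)w - w$ lies in $W$ straight from the definition, which together with $\pi_1(N)$-invariance of $W^\perp$ immediately forces the action on $W^\perp$ to be trivial.
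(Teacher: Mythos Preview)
Your proposal is correct and follows essentially the same approach as the paper: the same identity for (1), unitarity to get invariance of $W^\perp$ in (2), and the orthogonal decomposition for (3). Your argument for triviality in (2), observing directly that $\a(g)w-w\in W\cap W^\perp=\{0\}$, is in fact slightly cleaner than the paper's version, which reaches the same conclusion by expanding $\ll \a(g)w-w,\a(g)w-w\rr$ and showing it vanishes.
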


We first note that for any $h,g\in \pi_1(N)$ and $v\in V$ we have 
\[ h\cdot (\a(g)v-v)=\a(hg)v-\a(h)v=(\a(hg)v-v)+(\a(h)(-v)+(-v)).\]
This implies that  $W$ is indeed a $(\pi_1(N),\C)$-submodule of $V$.
Since $\pi_1(N)$ acts isometrically on $V$ it is also straightforward to see  that  $W^\perp$ is  a $(\pi_1(N),\C)$-submodule of $V$. It remains to show that  the $\pi_1(N)$-action on $W^\perp$ is trivial.
To show this, let  $w\in W^\perp$ and $g\in \pi_1(N)$. We then have 
\[ \ll \a(g)w-w,\a(g)w-w\rr=\ll \a(g)w,\a(g)w-w\rr-\ll w,\a(g)w-w\rr=0.\]
Since $\ll \,,\,\rr$ is positive-definite it follows that $\a(g)(w)-w=0$, i.e.\ $\a(g)(w)=w$. 
Finally the third statement of the claim is obvious.
This concludes the proof of the claim.

It follows from Lemma \ref{lem:les}, from $H_1(N;V)=0$  and from $V/W\cong W^\perp$ that there exists an exact sequence
\be \label{equ:les4} 0 \to  H_1(N;W^\perp)\to  H_0(N;W)\to H_0(N;V)\to H_0(N;W^\perp)\to 0.\ee
Since $\pi_1(N)$ acts trivially on $W^\perp$ it follows from  Lemma \ref{lem:trivialrep} that  $ H_1(N;W^\perp)\cong (W^\perp)^{b_1(N)}$ and $ H_0(N;W^\perp)\cong W^\perp$.
By Lemma \ref{lem:h0} we have $H_0(N;V)\cong V/W\cong W^\perp$, in particular the map on the right in the  exact sequence (\ref{equ:les4}) is an isomorphism. Hence the map on the left of (\ref{equ:les4}) also needs to be an isomorphism. We conclude that 
\[ b_1(N)\cdot \dim W^\perp =\dim H_1(N;W^\perp)=\dim H_0(N;W)\leq \dim W.\]
On the other hand we also have the following inequality
\[\frac{1}{2}\dim V\leq  \dim H_0(N;V)=\dim (V/W) =\dim W^\perp.\]
Together with $\dim W+\dim W^\perp=\dim V$ we also get the inequality
\[ \dim W\leq  \frac{1}{2}\dim V.\]
Putting these three inequalities together we see that 
\[ b_1(N)\cdot \dim W^\perp\leq \dim H_0(N;W)\leq \dim W\leq \dim W^\perp.\]
This implies that $b_1(N)=0$ or $\dim H_0(N;W)=\dim W$.

We first consider the case that $b_1(N)=0$. As we mentioned above, we know that $N$ is either closed or has toroidal boundary. An elementary Poincar\'e duality and Euler characteristic argument shows that $b_1(N)=0$ implies that $b_2(N)=0$ and that $N$ is closed. Put differently, $N$ is indeed a rational homology sphere.

Now we consider the case that $\dim H_0(N;W)=\dim W$. In that case it follows from Lemma \ref{lem:h0} that $\pi_1(N)$ acts trivially on $W$. But we already showed that $\pi_1(N)$ also acts trivially on $W^\perp$. Since $\pi_1(N)\to \aut(W\oplus W^\perp)=\aut(V)$ is unitary it follows that the representation $\pi_1(N)\to \aut(V)$ is trivial. But it then follows from (\ref{equ:iso1}) that $b_1(N)=0$. As we have seen above, this implies that $N$ is a rational homology sphere.
\end{proof}

\subsection{Proof of the main theorem: $(2) \Rightarrow (1)$}\label{section:21}
In this section we will prove the implication   $(2) \Rightarrow (1)$ of Theorem \ref{mainthm}. In fact we will prove the following slightly stronger statement:

\begin{theorem}\label{mainthm21}
Let $N\ne S^3$ be a $3$-manifold that is closed or has toroidal boundary such that  $N$ has at most one prime summand that is not a rational homology sphere. Then $N$ can be made acyclic using a unitary representation that factors through a finite group.
\end{theorem}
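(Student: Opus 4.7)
The plan is to induct on the number $k$ of prime summands of $N$. The prime ($k=1$) base case splits into two independent subcases requiring different arguments: $N$ is a prime rational homology sphere (RHS) other than $S^3$, or $N$ is a prime $3$-manifold that is not an RHS (and hence, by Proposition~\ref{mainprop23a} together with the hypothesis, is closed or has toroidal boundary, with $b_1(N)\ge 1$). For the inductive step, I would peel off a prime RHS summand and combine its acyclic representation with the one supplied by the hypothesis.

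For the prime non-RHS base case, the plan is to exhibit a finite-order unitary character that makes $N$ acyclic. First I would choose an epimorphism $\phi\co\pi_1(N)\to\Z$; after possibly replacing $N$ by a finite cover (and using Eckmann--Shapiro, Lemma~\ref{lem:eckmann-shapiro}, to transport acyclicity back) one can arrange that $\phi$ is non-trivial on every boundary torus. The twisted Alexander module $H_1(N;\C[t^{\pm 1}]_\phi)$ is finitely generated and torsion, hence annihilated by some non-zero polynomial $\Delta(t)$. Picking $n$ so that no primitive $n$-th root of unity is a root of $\Delta$ and setting $\chi\co\Z\to U(1)$, $1\mapsto e^{2\pi i/n}$, the character $\chi\circ\phi$ is then unitary, factors through the finite group $\Z/n$, restricts non-trivially to each boundary torus so that $H_*(\partial N;\C_{\chi\circ\phi})=0$, and the specialisation of the twisted Alexander module gives $H_1(N;\C_{\chi\circ\phi})=0$. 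Poincar\'e--Lefschetz duality together with the long exact sequence of the pair $(N,\partial N)$ then yields $H_0=H_2=H_3=0$ as well.

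For the prime RHS base case, if $\pi_1(N)$ is finite then $N$ is a spherical space form with universal cover $S^3$, and I would take $G=\pi_1(N)$ and $V=\C[G]\ominus\C$ (the reduced regular representation). Eckmann--Shapiro identifies $H_*(N;\C[G])$ with $H_*(S^3;\C)$, so $H_*(N;V)\cong H_*(S^3;\C)\ominus H_*(N;\C)=0$. If $\pi_1(N)$ is infinite, then $N$ is aspherical and the virtual positive first Betti number theorem --- the combination of the results of Agol, Wise, Przytycki--Wise and Liu cited in the introduction, together with \cite{Na14} --- yields a finite regular cover $\widetilde N\to N$ with $b_1(\widetilde N)\ge 1$. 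Applying the previous paragraph to $\widetilde N$ produces a finite-image unitary character $\chi$ killing $H_*(\widetilde N;\C_\chi)$; inducing $\chi$ up to $\pi_1(N)$ via Lemma~\ref{lem:induced} (which preserves unitarity and finiteness of image) and invoking Lemma~\ref{lem:eckmann-shapiro} transfers acyclicity to $N$.

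For the inductive step, write $N=N'\# Q$ with $Q$ a prime RHS --- possible because at most one summand of $N$ is non-RHS --- and let $\alpha'\co\pi_1(N')\to\aut_\C(V')$ be the unitary, finite-image representation making $N'$ acyclic supplied by the induction. If $N'$ is itself an RHS, take $V=V'\oplus V_Q$ on $\pi_1(N)=\pi_1(N')*\pi_1(Q)$ with $\pi_1(N')$ acting as $\alpha'\oplus\idsmall$ and $\pi_1(Q)$ as $\idsmall\oplus\alpha_Q$, where $\alpha_Q\co\pi_1(Q)\to\aut_\C(V_Q)$ is the representation furnished by the RHS base case; if $N'$ contains the unique non-RHS prime, take $V=V'$ with $\pi_1(Q)$ acting trivially. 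In either case, running Mayer--Vietoris on $N=(N'\setminus D)\cup_{S^2}(Q\setminus D')$ --- as in the proof of Proposition~\ref{mainprop23b}, and using Lemmas~\ref{lem:dependsonlyonpi1}, \ref{lem:trivialrep} together with the pair sequence for $(N',N'\setminus D)$ to compute $H_2$ --- shows that the design of $V$ makes both the map $V\to V_{\pi_1(N')}\oplus V_{\pi_1(Q)}$ and the map $V\to H_2(N'\setminus D;V)\oplus H_2(Q\setminus D';V)$ into isomorphisms, forcing $H_*(N;V)=0$. The main obstacle is the prime RHS case with infinite $\pi_1$: producing a finite cover with positive $b_1$ requires the full strength of the recent cubulation and virtual specialness results; the rest of the proof --- the twisted Alexander argument, Eckmann--Shapiro, and the Mayer--Vietoris bookkeeping --- is then comparatively routine.
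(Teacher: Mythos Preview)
Your argument has a genuine gap in the ``prime non-RHS'' base case. You assert that the Alexander module $H_1(N;\C[t^{\pm 1}]_\phi)$ is torsion, but this is false in general, and no choice of $\phi$ or passage to a finite cover repairs it. Take $N$ to be a non-trivial $S^1$-bundle over a closed orientable surface $\Sigma_g$ with $g\ge 2$ and Euler number $e\ne 0$. Then $b_1(N)=2g\ge 4$, so $N$ falls squarely into your prime non-RHS case. Since the fiber class is torsion in $H_1(N;\Z)$, every epimorphism $\phi\co\pi_1(N)\to\Z$ kills the fiber and factors through $\pi_1(\Sigma_g)$; the corresponding infinite cyclic cover is (up to homotopy) $S^1\times\widetilde{\Sigma}_g$, and a K\"unneth/Euler-characteristic computation gives $\dim_{\C(t)}H_1(N;\C(t))=2g-2>0$. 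Hence the Alexander module has positive rank for \emph{every} $\phi$, and the specialisation $H_1(N;\C_{\chi\circ\phi})$ never vanishes. Moreover these bundles are not virtually fibered, and every finite cover is again a non-trivial $S^1$-bundle, so the same obstruction persists on all covers: your strategy cannot succeed for this $N$.

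This is exactly why the paper does \emph{not} attempt a uniform Alexander-module argument. It first invokes the virtual \emph{fibering} theorem (not merely virtual $b_1>0$) to pass to a finite cover that actually fibers over $S^1$; for a fibered manifold the Alexander module is automatically torsion, and then your specialisation argument goes through (this is Proposition~\ref{prop:mainthm31nongraph}). But virtual fibering is unavailable for closed graph manifolds, so the paper treats those separately: spherical manifolds via the free $SO(4)$-action (Proposition~\ref{prop:mainthmspherical}), closed non-spherical Seifert fibered spaces via an explicit $S^1$-bundle cover and Turaev's torsion computation (Proposition~\ref{prop:mainthmsfs}), and closed graph manifolds with non-trivial JSJ via \cite{Na14} (Proposition~\ref{prop:mainthm31graph}). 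Your proposal collapses precisely on the cases that force this split. The remaining ingredients of your sketch --- the reduced regular representation for spherical space forms, Eckmann--Shapiro to descend from covers, and the Mayer--Vietoris bookkeeping for connected sums --- are fine and close in spirit to the paper's treatment, but the missing step above is not a detail: it is the heart of the matter.
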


The proof of Theorem \ref{mainthm21} will require the remainder of this section and it will be broken up into several special cases. In most of these special cases we will make use of the following lemma.

\begin{lemma}\label{lem:gotofinitecover}
Let $N$ be a $3$-manifold which admits a finite cover  that can be made acyclic using a unitary representation that factors through a finite group. Then $N$ can also be made acyclic using a unitary representation that factors through a finite group.
\end{lemma}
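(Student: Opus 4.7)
The plan is to prove this lemma by directly applying the induced representation construction together with the Eckmann--Shapiro lemma, both of which are already set up earlier in the paper.

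Concretely, let $p\co N'\to N$ be a finite cover such that there exists a unitary representation $\a'\co \pi_1(N')\to \aut_\C(V')$ that makes $N'$ acyclic and factors through a finite group. Set $\pi=\pi_1(N)$ and identify $\pi':=p_*(\pi_1(N'))$ as a finite-index subgroup of $\pi$. The first step is to form the induced $(\Z[\pi],\C)$-bimodule
\[ V := \Z[\pi]\otimes_{\Z[\pi']} V', \]
with the associated representation $\a\co \pi\to \aut_\C(V)$ given by left multiplication on the first tensor factor.

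The second step is to verify that $\a$ has the desired algebraic properties. This is exactly the content of Lemma \ref{lem:induced}: since $\pi'$ has finite index in $\pi$, parts (2) and (3) of that lemma immediately give that $\a$ factors through a finite group and is unitary.

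The third step is to verify that $\a$ makes $N$ acyclic. By Lemma \ref{lem:eckmann-shapiro} applied to the connected finite covering $p\co N'\to N$ and the representation $\a'$, there are isomorphisms
\[ H_i(N';V'_{\a'}) \;\cong\; H_i\bigl(N;\,\Z[\pi]\otimes_{\Z[\pi']}V'_{\a'}\bigr) \;=\; H_i(N;V_\a) \]
for every $i$. By hypothesis the left-hand side vanishes for all $i$, so the right-hand side does too, and $\a$ makes $N$ acyclic.

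There is no substantive obstacle here; the statement is essentially a formal consequence of Shapiro's lemma combined with the preservation of unitarity and of factoring through a finite group under induction from a finite-index subgroup. The only thing to be a little careful about is that the Eckmann--Shapiro isomorphism in Lemma \ref{lem:eckmann-shapiro} goes in the direction we want, so that acyclicity upstairs transfers to acyclicity downstairs rather than the other way around, which it does.
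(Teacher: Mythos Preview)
Your proof is correct and follows exactly the same approach as the paper: form the induced representation $V=\Z[\pi]\otimes_{\Z[\pi']}V'$, invoke Lemma~\ref{lem:induced} for unitarity and factoring through a finite group, and invoke Lemma~\ref{lem:eckmann-shapiro} for acyclicity. The paper's proof is just a more compressed version of what you wrote.
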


\begin{proof}
Let  $p\colon N'\to N$ be a finite cover and let  $\a'\colon \pi_1(N')\to \aut_\C(V')$ be  a unitary representation that factors through a finite group that makes $N'$ acyclic. We denote by $\a\co \pi_1(N)\to \aut_\C(\Z[\pi_1(N)]\otimes_{\Z[\pi_1(N')]}V')$ the induced representation. It follows from Lemmas \ref{lem:induced} and \ref{lem:eckmann-shapiro} that $\a$ has the desired properties. 
\end{proof}

We now start out with the most important  special case in the proof of Theorem \ref{mainthm21}.

\begin{proposition}\label{prop:mainthm31nongraph}
Let $N$ be an irreducible  $3$-manifold with empty or toroidal boundary that is not a closed graph manifold.
Then there exists a unitary representation that factors through a finite group that makes $N$ acyclic.
\end{proposition}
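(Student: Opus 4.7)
The plan is to combine virtual fibering over $S^1$ with a $1$-dimensional Wang-sequence computation. By the combined work of Agol, Liu, Przytycki--Wise and Wise cited in the introduction, any $N$ satisfying the hypotheses admits a finite cover $p\co N'\to N$ which is diffeomorphic to the mapping torus of an orientation-preserving self-homeomorphism $\phi\co \Sigma\to \Sigma$ of a (connected) compact surface $\Sigma$. Thanks to Lemma~\ref{lem:gotofinitecover}, it is enough to exhibit a unitary representation of $\pi_1(N')$ that factors through a finite group and makes $N'$ acyclic.

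The fibration $\Sigma\to N'\to S^1$ yields a surjection $\nu\co \pi_1(N')\to \Z$. For $\zeta\in \C^\times$ I would consider the $1$-dimensional representation
\[
\a_\zeta\co \pi_1(N')\to \C^\times,\qquad g\mapsto \zeta^{\nu(g)}.
\]
Because $\a_\zeta$ restricts trivially to $\pi_1(\Sigma)$, the Wang long exact sequence of the mapping torus (equivalently, the Serre spectral sequence of the fibration, whose $E_2$-page has only two non-zero columns since $S^1$ has cohomological dimension one) expresses $H_*(N';\C_{\a_\zeta})$ in terms of the homology of $S^1$ with local coefficients $H_q(\Sigma;\C)$, where the generator of $\pi_1(S^1)$ acts as $\zeta\cdot \phi_*$. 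Consequently $H_*(N';\C_{\a_\zeta})$ vanishes if and only if the endomorphism $\zeta\cdot \phi_*-\id$ is invertible on $H_q(\Sigma;\C)$ for every $q$, equivalently if and only if $\zeta^{-1}$ is not an eigenvalue of $\phi_*$ on any $H_q(\Sigma;\C)$.

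Since $\bigoplus_q H_q(\Sigma;\C)$ is finite-dimensional, only finitely many $\zeta\in \C^\times$ are forbidden. Picking any root of unity $\zeta$ avoiding this finite set gives a unitary $\a_\zeta$ which takes values in the finite cyclic subgroup of $U(1)$ generated by $\zeta$ and which makes $N'$ acyclic. Lemma~\ref{lem:gotofinitecover} then completes the proof.

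The only genuinely non-trivial ingredient is virtual fibering, and the hypothesis ``not a closed graph manifold'' is tailored precisely to the class for which the cited theorems produce such a cover: closed hyperbolic manifolds virtually fiber by Agol and Wise; closed manifolds whose JSJ decomposition contains a hyperbolic piece virtually fiber by Przytycki--Wise together with Agol and Liu; and irreducible $3$-manifolds with non-empty toroidal boundary always admit a fibered finite cover, with the Seifert and graph cases with boundary being classical and the mixed case again covered by the package above. Once virtual fibering is granted, the remainder of the argument is routine and rests only on the fact that $U(1)$ contains infinitely many roots of unity.
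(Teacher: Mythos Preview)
Your argument is correct and follows the same strategy as the paper: reduce to a virtually fibered cover via the Agol--Liu--Przytycki--Wise--Wise package, then kill the homology with a one-dimensional character that avoids finitely many bad values determined by the monodromy. The only difference is cosmetic: the paper packages the fiber-bundle computation as a universal coefficient theorem over $\C[t^{\pm 1}]$ (with the bad values appearing as zeros of the invariant factors $p_i(t)$ of the Alexander module), whereas you use the Wang sequence and phrase the obstruction as eigenvalues of $\phi_*$; these are the same finite set.
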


In the proof of Proposition \ref{prop:mainthm31nongraph} we will need the following elementary lemma.

\begin{lemma}\label{lem:tor}
Let $A(t)$ be an $n\times n$-matrix over $\ct$ and let $z\in \C\sm \{0\}$. We view $\C$ as a $\ct$-module via $t^i\mapsto z^i$. If $z$ is not a zero of $\det(A(t))$, then 
\[ \ct^n/A(t)\ct^n\otimes_{\ct}\C=0 \mbox{ and } \tor_{\ct}(\ct^n/A(t)\ct^n,\C)=0.\]
\end{lemma}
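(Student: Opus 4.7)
The plan is to use the standard resolution of $\ct^n/A(t)\ct^n$ coming from multiplication by $A(t)$ and then evaluate at $z$.

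First I would observe that since $z$ is not a zero of $\det(A(t))$, the Laurent polynomial $\det(A(t))\in \ct$ is in particular nonzero. Because $\ct$ is an integral domain, this means that $A(t)\co \ct^n\to \ct^n$ is injective (e.g.\ by Cramer's rule, or because $A(t)$ becomes invertible after inverting $\det(A(t))$). Thus we obtain a free resolution
\[ 0\to \ct^n \xrightarrow{A(t)} \ct^n \to \ct^n/A(t)\ct^n \to 0\]
of $\ct^n/A(t)\ct^n$ as a $\ct$-module.

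Next I would apply $-\otimes_{\ct}\C$, where $\C$ is the $\ct$-module given by $t\mapsto z$. Since $\ct^n$ is free, hence flat, the higher Tors $\tor_i^{\ct}(\ct^n,\C)$ vanish, and the long exact Tor sequence collapses to the four-term exact sequence
\[ 0\to \tor_{\ct}(\ct^n/A(t)\ct^n,\C)\to \C^n \xrightarrow{A(z)} \C^n \to \ct^n/A(t)\ct^n\otimes_{\ct}\C\to 0,\]
where I am using the canonical identification $\ct^n\otimes_{\ct}\C\cong \C^n$ under which the map induced by multiplication by $A(t)$ is simply multiplication by the complex matrix $A(z)$ obtained by evaluating each entry at~$t=z$.

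Finally I would conclude by noting that $\det(A(z))\ne 0$ by hypothesis, so $A(z)\in M_n(\C)$ is invertible, and both the kernel and the cokernel of the middle map vanish. This yields simultaneously $\tor_{\ct}(\ct^n/A(t)\ct^n,\C)=0$ and $\ct^n/A(t)\ct^n\otimes_{\ct}\C=0$. There is no real obstacle here beyond carefully identifying the induced map with evaluation $A(z)$; everything else is formal homological algebra over the PID-like ring $\ct$.
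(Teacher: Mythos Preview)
Your proof is correct and follows essentially the same approach as the paper: both take the free resolution $0\to \ct^n\xrightarrow{A(t)}\ct^n\to \ct^n/A(t)\ct^n\to 0$, tensor with $\C$, identify the induced map with $A(z)$, and conclude from its invertibility. If anything, you are slightly more careful in justifying the injectivity of $A(t)$ at the start.
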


\begin{proof}
It follows from the definitions and standard arguments that  for any $z\in \C\sm \{0\}$ there exists an exact sequence
\[ \ba{rcl}0&\to &\tor_{\ct}(\ct^n/A(t)\ct^n,\C)\to \ct^n\otimes_{\ct}\C\xrightarrow{A(t)\otimes \op{id}}\ct^n\otimes_{\ct}\\[2mm]
&\to &\ct^n/A(t)\ct^n\otimes_{\ct}\C\to 0.\ea\]
The middle map is equivalent to the map $A(z)\colon \C^n\to\C^n$.
In particular, if $\det(A(z))\ne 0$, then the middle map is an isomorphism, hence the two outer modules have to vanish.
\end{proof}

We are now in a position to prove  Proposition \ref{prop:mainthm31nongraph}.

\begin{proof}[Proof of Proposition \ref{prop:mainthm31nongraph}]
Let $N$ be an irreducible  $3$-manifold  with empty or toroidal boundary that is not a closed graph manifold.
It follows from the work of Agol \cite{Ag13}, Liu \cite{Liu13}, Przytycki--Wise \cite{PW14,PW12} and Wise \cite{Wi09,Wi12a,Wi12b}
that $\pi_1(N)$ is `virtually special'. By work of Haglund--Wise \cite{HW08} and Agol \cite{Ag08} this implies that $\pi_1(N)$ is `virtually RFRS'. (We also refer to \cite{AFW13} for precise references.) 
It follows from Agol's virtual fibering theorem (see \cite[Theorem~5.1]{Ag08} and see \cite[Theorem~5.1]{FKt14} for an alternative account of Agol's proof) 
that $N$ is virtually fibered. This means that there exists a finite regular cover $M$ of $N$ which admits the structure of a surface bundle $p\co M\to S^1$.
(For graph manifolds with boundary this conclusion was already proved by Wang--Yu \cite{WY97}.)

We denote  by $\phi$ the homomorphism $p_*\colon \pi_1(M)\to \Z=\pi_1(S^1)$ induced by the projection. 
Note that $\phi$ also gives rise to a ring homomorphism
\[ \ba{rcl} \Z[\pi_1(M)]&\to& \ct\\
\sum_{i=1}^r a_ig_i&\mapsto & \sum_{i=1}^r a_it^{\phi(g_i)}\ea, \]
which we again denote by $\phi$.
Since $\pi_1(M)\to \ll t\rr$ is surjective and since it corresponds to a surface bundle  it follows  from standard arguments (see e.g. \cite{FKm06}) that there exist non-zero polynomials  $p_1(t),\dots,p_r(t)\in \ct$ such that
\[ 
\ba{rcl} H_0(M;\ct)&\cong& \ct/(1-t),\\
 H_1(M;\ct)&\cong &\bigoplus\limits_{i=1}^r \ct/p_i(t),\\
 H_2(M;\ct) &\cong &\left\{ \ba{ll} \ct/(1-t), &\mbox{ if $M$ is closed}, \\ 0, &\mbox{ otherwise.}\ea \right.\\
 H_i(M;\ct)&=&0, \mbox{ for any $i\geq 3$}.\ea \]

Now let $z\ne 1$ be a root of unity that is not a zero of any of the polynomials $p_1(t),\dots,p_r(t)$.
Such a $z$ exists since all the polynomials are non-zero.
We denote by $\rho\co \pi_1(M)\to \gl(1,\C)$ the character that is given by
$g\mapsto z^{\phi(g)}$.
We henceforth view $\C$ as a $(\Z[\pi_1(M)],\C)$-bimodule via $\rho$.
Furthermore, we view $\C$ as a $\ct$-module where the action is induced by $t^i\mapsto z^i$.
Note that as a $\ct$-module we can describe $\C$ also as $\ct/(z-t)$.

In light of Lemma \ref{lem:gotofinitecover} it suffices to prove the following claim.

\begin{claim}
For any $i$ we have $H_i(M;\C_\rho)=0$.
\end{claim}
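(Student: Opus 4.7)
The plan is to compute $H_*(M;\C_\rho)$ by routing the computation through the $\ct$-homology that has just been tabulated, and then to appeal to Lemma~\ref{lem:tor} to see that evaluation at the chosen root of unity $z$ annihilates everything in sight. The starting observation is that the $\Z[\pi_1(M)]$-action on $\C_\rho$ factors as $\Z[\pi_1(M)]\xrightarrow{\phi}\ct\to\aut_\C(\C)$, where the second arrow is the $\ct$-module structure sending $t$ to $z$. As a $\ct$-module, $\C$ is therefore $\ct/(t-z)$, and there is a natural identification of chain complexes
\[ C_*(\ti M)\otimes_{\Z[\pi_1(M)]}\C_\rho \;\cong\; \bigl(C_*(\ti M)\otimes_{\Z[\pi_1(M)]}\ct\bigr)\otimes_{\ct}\ct/(t-z). \]

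Next I would invoke the universal coefficient theorem for chain complexes over a PID. Because $M$ is compact and admits a finite CW structure, the $\ct$-chain complex $C_*(\ti M)\otimes_{\Z[\pi_1(M)]}\ct$ is a bounded complex of free (hence flat) $\ct$-modules. Since $\ct$ is a principal ideal domain, this yields for every $i$ a short exact sequence
\[ 0\;\to\;H_i(M;\ct)\otimes_{\ct}\C\;\to\; H_i(M;\C_\rho)\;\to\; \tor_{\ct}\bigl(H_{i-1}(M;\ct),\C\bigr)\;\to\;0. \]

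Finally I would feed in the explicit description of $H_*(M;\ct)$ listed just above the claim. Each nonzero module that appears is a cyclic $\ct$-module of the form $\ct/(q(t))$, with $q(t)$ either $1-t$ or one of the polynomials $p_1(t),\dots,p_r(t)$. By the choice of $z$ as a root of unity distinct from $1$ and from the roots of every $p_i(t)$, each such $q(t)$ satisfies $q(z)\ne 0$; Lemma~\ref{lem:tor} applied to the $1\times 1$ matrix $(q(t))$ then gives both $\ct/(q(t))\otimes_{\ct}\C=0$ and $\tor_{\ct}(\ct/(q(t)),\C)=0$. Both outer terms in the universal coefficient sequence therefore vanish in every degree, from which the claim follows. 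I do not expect a serious obstacle here: the topological content has already been absorbed into the Alexander-type computation of $H_*(M;\ct)$ coming from the fibered structure on $M$, and Lemma~\ref{lem:tor} handles the remaining algebra; the only hypothesis to verify for the universal coefficient theorem is that the $\ct$-chain complex consists of projective modules, which is immediate from the finiteness of the CW structure on $M$.
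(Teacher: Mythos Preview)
Your proposal is correct and follows essentially the same approach as the paper: factor the coefficients through $\ct$, apply the universal coefficient theorem over the PID $\ct$, and then invoke Lemma~\ref{lem:tor} together with the tabulated $\ct$-homology and the choice of $z$ to kill both outer terms. Your version is slightly more explicit about why the universal coefficient theorem applies and about applying Lemma~\ref{lem:tor} to each cyclic summand, but the argument is the same.
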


First we note that there exists a canonical isomorphism
\[ H_i(C_*(\wti{M})\otimes_{\Z[\pi_1(M)]}\C_\rho)\xrightarrow{\cong}
H_i(C_*(\wti{M})\otimes_{\Z[\pi_1(M)]}\ct\otimes_{\ct}\C_\rho).\]
Since  $\ct$ is a PID the universal coefficient theorem applied to the $\ct$-chain complex
$C_*(\ti{M})\otimes_{\Z[\pi_1(M)]}\ct$ and the $\ct$-module $\C_\rho$ 
gives us a short exact sequence
\[ 0\to H_i(M;\ct)\otimes_{\ct}\C_\rho \to H_i(M;\C_\rho)\to \tor_{\ct}(H_{i-1}(M;\ct),\C_\rho)\to 0.\]
It follows from our choice of $z$ and from Lemma \ref{lem:tor} that the two outer modules are zero.
This implies that  the middle module is zero as well.
This concludes the proof of the claim and thus also of the proposition.
\end{proof}

In the previous proposition we used the virtual fibering theorem to show that many $3$-manifolds can be made acyclic. In general this approach will not work for  closed graph manifolds.
For example, it is well-known that non-trivial $S^1$-bundles over surfaces are not virtually fibered.
Furthermore, by \cite[p.~86]{LW93} and \cite[Theorem~D]{Ne96} there exist many graph manifolds with a non-trivial JSJ-decomposition, that are not virtually fibered. Therefore for these manifolds we need  different arguments to show that they can be made acyclic.

\begin{proposition}\label{prop:mainthmspherical}
Let $N\ne S^3$ be a spherical  $3$-manifold. Then there exists a unitary representation  that makes $N$ acyclic.
\end{proposition}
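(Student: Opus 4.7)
The plan is to exhibit the reduced regular representation of $G:=\pi_1(N)$ as the desired unitary representation. Since $N$ is a spherical $3$-manifold with universal cover $S^3$, the group $G$ is finite, and by Maschke's theorem the left regular representation decomposes as $\C[G]=\C_{\mathrm{triv}}\oplus V'$, where $\C_{\mathrm{triv}}$ is the $G$-invariant line spanned by $\sum_{g\in G}g$ and $V'$ is its orthogonal complement (equivalently, the augmentation ideal). Equipping $\C[G]$ with the Hermitian inner product having $\{g\}_{g\in G}$ as orthonormal basis makes the regular representation unitary, hence $V'$ is unitary as well. The claim will be that $V'$ already makes $N$ acyclic.

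To verify this, I would compute $H_{i}(N;V')$ by comparing $H_i(N;\C[G])$ with $H_i(N;\C_{\mathrm{triv}})$, using that the direct sum decomposition of coefficients yields $H_i(N;\C[G])\cong H_i(N;\C_{\mathrm{triv}})\oplus H_i(N;V')$. For the first summand, apply the Eckmann--Shapiro lemma (Lemma \ref{lem:eckmann-shapiro}) to the finite regular cover $\tilde N=S^3\to N$ with trivial coefficients on $S^3$: this gives
\[
H_i(N;\C[G])\;\cong\;H_i(S^3;\C),
\]
which is $\C$ for $i=0,3$ and zero otherwise. For the second summand, Lemma \ref{lem:trivialrep} gives $H_i(N;\C_{\mathrm{triv}})\cong H_i(N;\Z)\otimes_\Z\C$. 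Since $N\ne S^3$ is a closed orientable $3$-manifold with finite fundamental group, the integral homology is $H_0(N)=\Z$, $H_1(N)=G^{\mathrm{ab}}$ (a finite group), $H_2(N)=0$ (from Poincar\'e duality and the universal coefficient theorem), and $H_3(N)=\Z$. Tensoring with $\C$ kills the torsion in $H_1$, so again the answer is $\C$ in degrees $0$ and $3$ and zero otherwise.

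Comparing these two computations degree by degree forces $H_i(N;V')=0$ for every $i$, so $V'$ makes $N$ acyclic. Moreover $V'$ is by construction a representation of the finite group $G=\pi_1(N)$, hence trivially factors through a finite group, giving the slightly stronger conclusion needed for Theorem \ref{mainthm21}. I do not anticipate any real obstacle here: the only minor points to be careful about are checking that the Eckmann--Shapiro lemma applies (the covering $S^3\to N$ is finite because $G$ is finite) and verifying the computation of $H_*(N;\Z)$ for spherical manifolds, both of which are routine.
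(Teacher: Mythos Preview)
Your argument is correct. The comparison with the paper's proof is worth noting, since the two choose different representations and organize the computation differently.

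The paper uses the geometric $4$-dimensional representation $\alpha\colon \pi_1(N)\to SO(4)\subset U(4)$ coming from the free action on $S^3$. The vanishing of $H_0$ is deduced from the fact that a free orthogonal action has no $1$-eigenvector; for $i=1,2$ the paper invokes Maschke's theorem to realise $\C^4_\alpha$ as a direct summand of some $\C[\pi]^n$ and then uses Eckmann--Shapiro exactly as you do; finally $H_3=0$ is obtained from an Euler-characteristic count. Your route is more uniform: by taking $V'$ to be the augmentation ideal in $\C[G]$ you have the splitting $\C[G]=\C_{\mathrm{triv}}\oplus V'$ from the outset, so a single dimension comparison between $H_*(N;\C[G])\cong H_*(S^3;\C)$ and $H_*(N;\C)\cong H_*(N;\Z)\otimes\C$ handles all degrees at once, with no separate $H_0$ or $H_3$ argument needed. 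The trade-off is that the paper produces a $4$-dimensional representation independent of $|G|$, whereas yours has dimension $|G|-1$; for the purposes of Theorem~\ref{mainthm21} this is irrelevant, and both representations visibly factor through the finite group $\pi_1(N)$.
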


\begin{proof}
Let $N\ne S^3$ be a spherical $3$-manifold. We abbreviate $\pi_1(N)$ with $\pi$.
Since $N$ is spherical there exists  a faithful representation $\a\colon \pi\to SO(4)\subset U(4)$ such that $\pi$ acts freely on $\R^4$. It follows that given any non-trivial $g\in \pi$ the matrix $\a(g)$ does not have $1$ as an eigenvalue. Since $\pi$ is non-trivial there exists a non-trivial element in $\pi$, and it follows from Lemma \ref{lem:h0} that $H_0(N;\C^4_\a)=0$. 

It is an immediate consequence of Maschke's theorem \cite[Theorem~XVIII.1.2]{La11} that there exists an $n$ and a $(\Z[\pi],\C)$-bimodule $V$ such that $\C[\pi]^n=\C^4_{\a}\oplus V$. Since the universal cover of $N$ is $S^3$ it  follows from Lemma \ref{lem:eckmann-shapiro} that for $i=1,2$ we have
\[ H_i(N;\C^4_\a)\oplus H_i(N;V)\cong H_i(N;\C[\pi]^n)=H_i(S^3;\C^n)=0.\]
Finally it follows from Lemma \ref{lem:eulermultiplicative} that $\chi(N,\a)=4\cdot \chi(N)=0$. Together with the above calculations this implies that $H_3(N;\C^4_\a)=0$.
\end{proof}

\begin{proposition}\label{prop:mainthmsfs}
Let $N$ be a  $3$-manifold that is a closed non-spherical Seifert fibered manifold.
Then there exists a unitary representation that factors through a finite group that makes $N$ acyclic.
\end{proposition}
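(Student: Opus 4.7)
My plan is to reduce to the case where $N$ is an $S^1$-bundle over a closed orientable surface $\Sigma$ and then exhibit an explicit one-dimensional unitary character that makes $N$ acyclic. A standard application of Selberg's lemma to the base orbifold of a closed orientable Seifert fibered $3$-manifold produces a finite regular cover in which all exceptional fibers are resolved, so that the cover is an honest $S^1$-bundle over a closed orientable surface. Invoking Lemma \ref{lem:gotofinitecover}, I may therefore assume $N$ itself is such a bundle $p\colon N\to\Sigma$ of Euler number $e\in\Z$. If $\Sigma=S^2$ then $\pi_1(N)$ is finite, so the non-sphericality hypothesis forces $e=0$ and $N=S^2\times S^1$. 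If $\Sigma$ has positive genus and $|e|=1$, I further replace $N$ by its pullback along a connected double cover $\Sigma'\to\Sigma$; this is a double cover of $N$ and has Euler number $\pm 2$. After these reductions I may assume either $e=0$ or $|e|\geq 2$.

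Next I construct the character. The group $\pi_1(N)$ fits into a central extension
\[
1\to\Z\langle h\rangle \to \pi_1(N)\to \pi_1(\Sigma)\to 1,
\]
and the only relation beyond those of $\pi_1(\Sigma)$ is $\prod_i[a_i,b_i]=h^e$. I choose a root of unity $\zeta$ with $\zeta^e=1$ and $\zeta\neq 1$---a primitive $|e|$-th root if $|e|\geq 2$, and any nontrivial root of unity if $e=0$---and send $h\mapsto\zeta$ and the chosen lifts of the surface generators to $1$. The relations are then satisfied, so this defines a character $\rho\colon \pi_1(N)\to U(1)$ whose image is the finite cyclic subgroup $\langle\zeta\rangle$; in particular $\rho$ is unitary and factors through a finite group.

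To finish, I show $H_*(N;\C_\rho)=0$ using the Serre spectral sequence of the bundle $S^1\to N\to\Sigma$ with coefficients in $\C_\rho$: its $E^2$-page is $E^2_{p,q}=H_p(\Sigma;H_q(S^1;\C_\rho))$, where the inner coefficients form a local system on $\Sigma$ whose stalk is the twisted homology of the fiber. The restriction of $\rho$ to $\pi_1(S^1)=\Z$ sends the generator to $\zeta\neq 1$, so this stalk is the homology of the exact complex $\C\xrightarrow{\zeta-1}\C$ and vanishes, whence $E^2=0$ and $H_*(N;\C_\rho)=0$. The main obstacle is arranging the character in the Euler number $\pm 1$ case, since then the constraint $\zeta^e=1$ with $\zeta\neq 1$ has no solution. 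Passing to a double cover of the base multiplies the Euler number by two and resolves the issue whenever $\Sigma$ has positive genus; over $\Sigma=S^2$, where no nontrivial connected cover exists, any $|e|\geq 1$ would force $N$ to be spherical, which has been excluded by hypothesis.
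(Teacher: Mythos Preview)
Your proof is correct and follows the same strategy as the paper's: pass via Lemma~\ref{lem:gotofinitecover} to a finite cover that is an $S^1$-bundle over a closed orientable surface, double the base when the Euler number is $\pm 1$, and then use a one-dimensional character that is a nontrivial root of unity on the fiber class. The only substantive difference is in the final step: the paper cites Turaev~\cite[Section~VII.5.1,~Eq.~(5.a)]{Tu02} as a black box for the vanishing of $H_*(N;\C_\rho)$, whereas your Serre spectral sequence computation (the fiber homology $H_*(S^1;\C_\rho)$ already vanishes since $\zeta\neq 1$, so $E^2=0$) makes this step self-contained and arguably more transparent. One small expository point: Selberg's lemma applied to the base orbifold group yields a manifold cover of the base, but not automatically an orientable one; as in the paper you may need a further double cover to arrange this, though it does not affect the argument.
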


\begin{proof}
Let $N$ be a  $3$-manifold that is a closed non-spherical Seifert fibered manifold. We write $\pi=\pi_1(N)$.
We start out with the following claim.

\begin{claim}
The manifold $N$ is covered by an $S^1$-bundle $\wti{N}$ over an orientable surface such that the $S^1$-fiber defines a  non-trivial element in $H_1(\wti{N};\Z)$.
\end{claim}

We first note that if $M$ is a $3$-manifold that is an $S^1$-bundle over an orientable surface $F$ of genus $g$ with Euler number $e$, then 
it follows from the discussion in 
\cite[Section~8.2]{AFW13} (which builds in turn on \cite[p.~435]{Sc83}) that
\[ \pi_1(M)\cong \ll a_1,b_1,\dots,a_g,b_g,t\,|\, \prod_{i=1}^g [a_i,b_i]=t^e, [a_i,t]=1,[b_i,t]=1\mbox{ for }i=1,\dots,g\rr\]
where $t$ corresponds to the fiber of the $S^1$-bundle. In particular,  the $S^1$-fiber generates a subgroup 
of $H_1(M;\Z)$ that is isomorphic to $\Z/e\Z$.

We now turn to the proof of the claim.
By \cite[Lemma~8.8]{AFW13} there exists a finite cover $N'$ of $N$ that is an $S^1$-bundle over a surface $F'$ of genus $g$. After possibly going to a double cover we can and will assume that $F'$ is orientable.
As above we denote by $e'$ the Euler number of the $S^1$-bundle. If  $e'\ne \pm 1$, then we are done by the above.
Now suppose that $e'=\pm 1$. Since $N$ is not spherical it follows that $\pi_1(N)$ is infinite, in particular the surface $F'$ is not a sphere. Therefore there exists a non-trivial epimorphism $\pi_1(F')\to \Z_2$.
We denote by $\wti{F'}\to F'$ the corresponding $2$-fold cover and we denote by $\wti{N'}\to N'$ the 2-fold cover corresponding to the epimorphism $\pi_1(N')\to\pi_1(F')\to \Z_2$. It follows from \cite[Lemma~3.5]{Sc83} that $\wti{N'}$ is an $S^1$-bundle over $\wti{F'}$ with Euler number $2e'$. It thus follows from the above discussion that we are done.
This concludes the proof of the claim.

The claim together with Equation (5.a) in \cite[Section~VII.5.1]{Tu02} implies that $N$ admits a finite cover that can be made acyclic using a unitary $1$-dimensional representation that factors through a finite group. 
The proposition  follows from  Lemma \ref{lem:gotofinitecover}.
\end{proof}

\begin{proposition}\label{prop:mainthm31graph}
Let $N$ be a closed graph manifold that is not a Seifert fibered manifold. 
Then there exists a unitary representation that factors through a finite group that makes $N$ acyclic.
\end{proposition}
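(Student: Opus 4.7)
My approach parallels Proposition \ref{prop:mainthmsfs}: I produce a finite cover $\tilde N \to N$ and a $1$-dimensional unitary character $\rho \co \pi_1(\tilde N) \to U(1)$ of finite order making $\tilde N$ acyclic, which by Lemma \ref{lem:gotofinitecover} suffices to prove the proposition.

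Using standard results on Seifert fibered spaces with boundary (as in \cite[Lemma 8.8]{AFW13} and the proof of Proposition \ref{prop:mainthmsfs}), I first pass to a finite regular cover $\tilde N$ of $N$ in which every JSJ piece $\tilde M_i$ is a circle bundle over an orientable compact surface $F_i$ with non-empty boundary; since $H^2(F_i;\Z)=0$, each such bundle is necessarily the trivial product $\tilde M_i \cong S^1 \times F_i$. Because $N$ is not Seifert fibered, the JSJ graph of $\tilde N$ is non-trivial and the Seifert fibrations of adjacent pieces do not match along the common JSJ tori. By passing to a further finite cover I then arrange that the homology class $[f_i]\in H_1(\tilde N;\Z)$ of each $S^1$-fiber is non-zero.

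Given such a cover, I choose a finite abelian quotient $q\co H_1(\tilde N;\Z)\to G$ with $q([f_i])\ne 0$ for each $i$, and then a unitary character $\chi\co G\to U(1)$ with $\chi(q([f_i]))\ne 1$ for each $i$; such a $\chi$ exists because any finite list of non-zero elements of a finite abelian group can be separated from the identity by some character of finite order. Let $\rho\co \pi_1(\tilde N)\to U(1)$ be the composition of $\chi\circ q$ with the abelianization. Acyclicity of $\tilde N$ then follows from a Mayer--Vietoris computation along the JSJ decomposition: for each piece $\tilde M_i\cong S^1\times F_i$, the restriction of $\rho$ to $\pi_1(\tilde M_i)=\Z\times \pi_1(F_i)$ is a tensor product of characters with the first factor non-trivial on $[f_i]$, so by the K\"unneth formula $H_*(\tilde M_i;\C_\rho)=0$; for each JSJ torus $T$, the subgroup $\pi_1(T)$ contains a fiber class of an adjacent piece, so $\rho|_{\pi_1(T)}$ is non-trivial and hence $H_*(T;\C_\rho)=0$. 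The Mayer--Vietoris sequence then forces $H_*(\tilde N;\C_\rho)=0$.

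The crux of the argument, and the main obstacle I expect, is the middle step: arranging that each fiber class $[f_i]$ is non-zero in $H_1(\tilde N;\Z)$ for some finite cover $\tilde N$. This is precisely where the non-Seifert-fibered hypothesis enters---a non-trivial JSJ graph allows one to separate the distinct fibers of distinct pieces in $H_1$ of a carefully chosen cover, in contrast to the Seifert fibered case (where a single global fiber is shared by all pieces and may be forced to vanish in homology for any cover) that is handled separately in Proposition \ref{prop:mainthmsfs} via Turaev's formula.
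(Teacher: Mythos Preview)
Your Mayer--Vietoris endgame is sound: once you actually have a finite cover $\tilde N$ with product JSJ pieces $S^1\times F_i$ and a finite-order character $\rho$ non-trivial on every fiber class, the K\"unneth and Mayer--Vietoris computation you sketch does force $H_*(\tilde N;\C_\rho)=0$, and Lemma~\ref{lem:gotofinitecover} finishes. The difficulties are upstream.

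First, the justification you give in Step~3 is false as written. It is \emph{not} true that every finite list of non-zero elements of a finite abelian group can be separated from $0$ by a single character: in $G=(\Z/2)^2$ the three non-zero elements cannot be, since any character $\chi$ satisfies $\chi(1,0)\,\chi(0,1)\,\chi(1,1)=1$ with each factor in $\{\pm 1\}$, so at least one factor equals $1$. Thus if $H_1(\tilde N;\Z)$ happens to be finite you may be stuck no matter which quotient $G$ you pick. What you really need is either $b_1(\tilde N)>0$ (so that the bad locus in the character torus has measure zero and finite-order characters are dense in its complement) or a cover constructed with the separation property already in view; neither is arranged in your sketch.

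Second, and more seriously, you yourself flag Step~2 as the ``main obstacle'' and offer no argument for it beyond the assertion that a non-trivial JSJ graph ``allows one to separate the distinct fibers \ldots\ in $H_1$ of a carefully chosen cover.'' Producing a finite cover in which every regular fiber is non-trivial in $H_1$---and in which a single finite-order character avoids them all---is the entire content of the proposition; the bare non-Seifert hypothesis does not hand it to you. This is exactly the work the paper outsources. The paper's proof first splits off the case where $N$ is finitely covered by a torus bundle (handled by the fibering argument of Proposition~\ref{prop:mainthm31nongraph}), and for the remaining closed graph manifolds invokes \cite[Propositions~2.9 and~4.18]{Na14}, which construct precisely such a cover together with a one-dimensional unitary character of finite order making it acyclic. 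Your plan is in effect a proposal to reprove those results of \cite{Na14}; the unproven Step~2 is where that reproof would have to happen.
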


\begin{proof}
Let $N$ be a closed graph manifold that is not a Seifert fibered manifold. 
If $N$ is finitely covered by a torus-bundle, then the argument in the proof of Proposition \ref{prop:mainthm31nongraph} shows that $N$ can be made acyclic using a unitary representation that factors through a finite group. 

Now suppose that  $N$ is not finitely covered by a torus-bundle. It follows from \cite[Proposition~2.9]{Na14} and \cite[Proposition~4.18]{Na14} (applied to the zero cohomology class) that there exists a finite covering $p\colon N'\to N$ and a one-dimensional unitary representation $\a'\colon \pi_1(N')\to U(1)$ that factors through a finite group that makes $N'$ acyclic. The proposition, once again, follows from  Lemma \ref{lem:gotofinitecover}.
\end{proof}

\begin{proposition}\label{prop:mainthmnonprime}
Let $N\ne S^3$ be a $3$-manifold that  is closed or has toroidal boundary
and with at most one prime summand that is not a rational homology sphere.
Then there exists a unitary representation that factors through a finite group that makes $N$ acyclic.
\end{proposition}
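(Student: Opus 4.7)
The plan is to reduce to the prime case---where Propositions~\ref{prop:mainthm31nongraph}--\ref{prop:mainthm31graph} already produce the required representation---and then to pass back to $N$ via a Mayer--Vietoris computation on the prime decomposition. I write $N = N_1 \# \cdots \# N_k$ with all summands different from $S^3$. The hypothesis lets me label things so that $N_1$ is the unique summand that might fail to be a rational homology sphere; in particular, since any prime summand with boundary is non-closed and hence not a rational homology sphere, $N_1$ must carry all toroidal boundary components of $N$, while $N_2,\ldots,N_k$ are closed rational homology spheres. When $k=1$ the earlier propositions apply to $N = N_1$ directly, so I may assume $k \geq 2$ and set $N' := N_2 \# \cdots \# N_k$; iterating Mayer--Vietoris with $\Q$-coefficients shows that $N'$ is again a closed rational homology sphere.

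By the earlier propositions there is a unitary representation $\alpha\co\pi_1(N_1)\to\aut_\C(V)$ that factors through a finite group and makes $N_1$ acyclic. Van Kampen identifies $\pi_1(N)$ with the free product $\pi_1(N_1)*\pi_1(N_2)*\cdots*\pi_1(N_k)$, so I extend $\alpha$ to $\tilde\alpha\co\pi_1(N)\to\aut_\C(V)$ by letting $\pi_1(N_j)$ act trivially on $V$ for $j \geq 2$. The image of $\tilde\alpha$ coincides with the image of $\alpha$, so $\tilde\alpha$ is still unitary and still factors through a finite group.

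To verify acyclicity I decompose $N = (N_1\sm D_1)\cup_{S^2}(N'\sm D')$ and run the Mayer--Vietoris sequence with coefficients $V_{\tilde\alpha}$. Combining Lemma~\ref{lem:dependsonlyonpi1}, the acyclicity of $N_1$, and the long exact sequences of the pairs $(N_1, D_1)$ and $(N_1\sm D_1, S^2)$ yields $H_i(N_1\sm D_1; V) = 0$ for $i \neq 2$ and $H_2(N_1\sm D_1; V) \cong V$, with the isomorphism induced by the inclusion $S^2 \hookrightarrow N_1\sm D_1$. Meanwhile $\pi_1(N')$ acts trivially on $V$, so Lemma~\ref{lem:trivialrep} identifies $H_i(N'\sm D'; V)$ with $H_i(N'\sm D'; \Z)\otimes_\Z V$; the rational homology sphere hypothesis forces $H_1$ and $H_2$ of $N'\sm D'$ to be torsion, and tensoring with the $\C$-vector space $V$ annihilates them, leaving $H_0(N'\sm D'; V) \cong V$ as the only non-zero term. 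Plugging this into Mayer--Vietoris, the vanishing of $H_*(N;V)$ reduces to showing that the two inclusion-induced maps
\[H_0(S^2;V) \longrightarrow H_0(N_1\sm D_1;V)\oplus H_0(N'\sm D';V) \quad\text{and}\quad H_2(S^2;V) \longrightarrow H_2(N_1\sm D_1;V)\oplus H_2(N'\sm D';V)\]
are isomorphisms, which they are by the computations above.

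The main point I expect to have to pin down carefully is the $H_2$-bookkeeping: one must confirm that the copy of $V$ appearing in $H_2(N_1\sm D_1; V)$ is genuinely the image of $H_2(S^2; V)$ under inclusion, so that the Mayer--Vietoris map is an \emph{isomorphism} rather than merely surjective or injective. This identification falls out of the connecting homomorphisms in the two pair sequences once one uses that $H_3(N_1; V) = 0$---which holds either by acyclicity (closed case) or because $N_1$ is non-closed (toroidal boundary case).
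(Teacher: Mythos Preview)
Your argument is correct and follows the same strategy as the paper: reduce to the prime summand $N_1$, extend the acyclic representation trivially over the remaining rational-homology-sphere factor, and apply Mayer--Vietoris to the connected-sum decomposition. The only difference is in the bookkeeping of the Mayer--Vietoris step: the paper constructs a collapsing map $g\colon N\to N_1$ and compares the two Mayer--Vietoris sequences via the five-lemma, whereas you compute $H_*(N_1\sm D_1;V)$ and $H_*(N'\sm D';V)$ directly and check the degree-$0$ and degree-$2$ maps by hand---both executions are valid and of comparable length.
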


\begin{proof}
Let $N$ be a $3$-manifold that  is closed or has toroidal boundary
and which has at most one prime summand that is not a rational homology sphere.
A straightforward Mayer--Vietoris theorem shows that the connected sum of two rational homology spheres is again a rational homology sphere. It follows from the Prime Decomposition Theorem that $N$ can be written as a connected sum $N\cong N_1\# N_2$ where $N_1\ne S^3$ is prime and $N_2$ is a rational homology sphere. For $i=1,2$ we pick an open $3$-ball $D_i\subset N_i$
and we pick an orientation reversing diffeomorphism $f\co \partial \ol{D_1}\to \partial \ol{D_2}$. We then identify $N$ with $(N_1\sm D_1)\cup_{\partial \ol{D_1}=\partial \ol{D_2}}(N_2\sm D_2)$.

By the previous propositions there exists a unitary representation $\a\colon \pi_1(N_1)\to \aut_\C(V)$ that makes $N_1$ acyclic and that factors through a finite group. We equip $N=N_1\# N_2$ with the representation that is given by 
\[ \pi_1(N)=\pi_1(N_1\sm D_1)*\pi_1(N_2\sm D_2)\to \pi_1(N_1\sm D_1)=\pi_1(N_1)\xrightarrow{\a}\aut_\C(V).\]
We denote by $g\colon N_2\sm D_2\to \ol{D_1}$ the map that is given by collapsing the complement of a neighborhood of $\partial \ol{D_2}$ to a point and that  induces the map $f$ on $\partial \ol{D_2}$. This map $g$, together with the identity on $N_1\sm D_1$ also induces a  map $g\colon N\to N_1$. These maps 
give us the following commutative diagram of  Mayer-Vietoris sequences:
\[
\ba{ccccccccccc}
\to &H_i(S^2;V)&\to & H_i(N_1\sm D_1;V)&\oplus&  H_i(N_2\sm D_2;V)&\to &H_i(N;V)&\to \\[2mm]
&\downarrow\id &&\downarrow \id &&\downarrow g_*&&\downarrow g_*& \\[2mm]
\to & H_i(S^2;V)&\to & H_i(N_1\sm D_1;V)&\oplus&  H_i(\ol{D_1};V)&\to &H_i(N_1;V)&\to \ea \]
Since $N_2\sm D_2$ and $\ol{D_1}$ are both rational homology ball we see that the maps $H_i(N_2\sm D_2;\C)\to H_i(\ol{D_1};\C)$ on ordinary $\C$-homology are isomorphisms. But the coefficient system is trivial in both cases, so it follows from Lemma \ref{lem:trivialrep} that the maps $H_i(N_2\sm D_2;V)\to H_i(\ol{D_1};V)$ are also isomorphisms. We deduce from the five-lemma that the maps $H_i(N;V)\to H_i(N_1;V)$ are isomorphisms. Since the latter twisted homology groups vanish, so do the former.
\end{proof}

It is clear that all the propositions above put together give a proof of  Theorem \ref{mainthm21}.

\end{document}